\date{November 2, 2017}
\newcommand{\R}{\mathbb{R}} % reelle
\newcommand{\N}{\mathbb{N}} % natuerliche
\newcommand*\diff{\mathop{}\!\mathrm{d}} % d in dx im Integral
\newtheorem{theorem}{Theorem}
\newtheorem{proposition}[theorem]{Proposition}
\newtheorem{corollary}[theorem]{Corollary}
\newtheorem{lemma}[theorem]{Lemma}
\title{Classification of positive solutions to a nonlinear biharmonic equation with critical exponent}
\author[1,2]{Rupert L. Frank\thanks{r.frank@lmu.de}}
\author[1]{Tobias K\"onig\thanks{tkoenig@math.lmu.de}}
\affil[1]{Mathematisches Institut, Ludwig-Maximilians-Universit\"at M\"unchen}
\affil[2]{Department of Mathematics, Caltech}
\begin{document}

\maketitle

\begin{abstract}
For $n \geq 5$, we consider positive solutions $u$ of the biharmonic equation 
\[ \Delta^2 u = u^\frac{n+4}{n-4} \qquad \text{on}\ \R^n \setminus \{0\}
\]
with a non-removable singularity at the origin. We show that $|x|^{\frac{n-4}{2}} u$ is a periodic function of $\ln |x|$ and we classify all periodic functions obtained in this way. This result is relevant for the description of the asymptotic behavior near singularities and for the $Q$-curvature problem in conformal geometry.
\end{abstract}

%%%%%%%%%%%%%%%%%%%%%

\section{Introduction and main results}

In this paper we are interested in positive solutions $u$ of the equation
\begin{equation}
\label{equation on Rn}
\Delta^2 u = u^\frac{n+4}{n-4}
\qquad\text{in}\ \R^n\setminus\{0\}
\end{equation}
for $n\geq 5$. As we will explain later in more detail, this equation serves on one hand as a model problem for higher order equations with critical non-linearity and on the other hand has a concrete meaning in the $Q$-curvature problem in conformal geometry. It is well-known that the absence of the maximum principle for equations involving the bi-Laplacian poses great challenges both on a conceptual and on a technical level. Nevertheless we will succeed here to prove a classification result for positive solutions of \eqref{equation on Rn} which is completely analogous to its second order counterpart.

We will work throughout with classical solutions of \eqref{equation on Rn}, that is, $u\in C^4 (\R^n\setminus\{0\})$. Because of the regularity theory in \cite{UhVi} (which extends that in \cite{ChGuYa} to $n\geq 5$) this is not a restriction.

In a fundamental work \cite{lin} Lin has shown that all solutions $u$ with a \emph{removable singularity} at the origin (so that \eqref{equation on Rn} holds in all of $\R^n$) are given by
\begin{equation}
\label{eq:linsol}
u(x) = c_n \left( \frac{\lambda}{1+ \lambda^2 |x-x_0|^2} \right)^\frac{n-4}{2} \,,
\qquad c_n = \left((n-4)(n-2)n(n+2) \right)^{\frac{n-4}{8}} \,,
\end{equation}
for some $\lambda>0$ and $x_0\in\R^n$. Solutions of the closely related equation $\Delta^2 u = |u|^{\frac{8}{n-4}} u$ in $\R^n$ are, in particular, given by optimizers of the Sobolev inequality
$$
\int_{\R^n} (\Delta u)^2 \diff x \geq \mathcal S_n \left( \int_{\R^n} |u|^\frac{2n}{n-4} \diff x \right)^{\frac{n-4}{n}} \,.
$$
These optimizers were classified in \cite{Lieb} in an equivalent dual formulation and are again given by constant multiples of the functions in \eqref{eq:linsol}. For a classification of positive solutions with removable singularities to the four-dimensional analogue of \eqref{equation on Rn} we refer to \cite{ChYa,lin} and for the higher order case to \cite{wei-xu}. 

In this paper we will be concerned with solutions $u$ of \eqref{equation on Rn} with \emph{non-removable singularities}. It was also shown by Lin \cite{lin} that such solutions are necessarily radial. We pass to logarithmic coordinates (in this context also known as \emph{Emden--Fowler coordinates}) and write
$$
u(x) = |x|^{-\frac{n-4}{2}} v(\ln |x|) \,.
$$
By a short computation we find that equation \eqref{equation on Rn} for $u$ is equivalent to the following ordinary differential equation for $v$,
\begin{equation}
\label{eq ODE}
v^{(4)}- \frac{n(n-4)+8}{2} v''+ \frac{n^2(n-4)^2}{16} v - |v|^\frac{8}{n-4} v = 0
\qquad\text{in}\ \R \,.
\end{equation}
Note that positive solutions $u$ of \eqref{equation on Rn} correspond to positive solutions $v$ of \eqref{eq ODE} and so $|v|^\frac{8}{n-4} v = v^\frac{n+4}{n-4}$. For some of our results, however, we also need to consider not necessarily positive functions $v$ and for such functions \eqref{eq ODE} is the relevant extension. We set
$$
a_0 = \left( \frac{n(n-4)}{4} \right)^\frac{n-4}{4} \,.
$$

Our first main result classifies all positive periodic solutions of \eqref{eq ODE} and describes their shape.\\

\begin{theorem} \label{theorem existence}
\begin{enumerate}
\item[(i)] Let $v \in C^4(\R)$ be a solution of \eqref{eq ODE}. Then $\inf_\R |v| \leq a_0$, with equality if and only if $v$ is a non-zero constant.
\item[(ii)] Let $a \in \left(0, a_0 \right)$. Then there is a unique (up to translations) bounded solution $v\in C^4(\R)$ of \eqref{eq ODE} with minimal value $a$. This solution is periodic, has a unique local maximum and minimum per period and is symmetric with respect to its local extrema. 
\end{enumerate}
\end{theorem}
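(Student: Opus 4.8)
The argument is built around two structural features of \eqref{eq ODE}. Set $p:=\tfrac{n+4}{n-4}$, $\alpha:=\tfrac{n(n-4)+8}{2}$, $\beta:=\tfrac{n^2(n-4)^2}{16}$, so \eqref{eq ODE} reads $v^{(4)}-\alpha v''+\beta v=|v|^{p-1}v$. First, the equation is autonomous and invariant under $t\mapsto -t$, hence any solution with $v'(t_0)=v'''(t_0)=0$ is even about $t_0$. Second, the energy
\[
\mathcal E[v]:=\tfrac12(v'')^2-v'v'''+\tfrac\alpha2(v')^2-\tfrac\beta2 v^2+\tfrac1{p+1}|v|^{p+1}
\]
is constant along solutions. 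I will also use the factorisation $\partial_t^4-\alpha\partial_t^2+\beta=\bigl(\partial_t^2-\tfrac{n^2}4\bigr)\bigl(\partial_t^2-\tfrac{(n-4)^2}4\bigr)$, the fact that the nonzero constant solutions of \eqref{eq ODE} are $v\equiv\pm a_0$ (from $|v|^{p-1}=\beta$, whence $|v|=a_0$), and that, writing $W(s):=\tfrac1{p+1}|s|^{p+1}-\tfrac\beta2 s^2$, one has $\mathcal E[v]=\tfrac12(v'')^2+W(v)$ whenever $v'=0$, while $W$ attains its minimum over $s>0$ exactly at $a_0$.

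\emph{Part (i).} If $v$ vanishes somewhere then $\inf_\R|v|=0<a_0$; otherwise $v$ has constant sign, say $v>0$, and it suffices to show $v\ge a_0$ on $\R$ forces $v\equiv a_0$. Convexity of $s\mapsto s^p$ on $[a_0,\infty)$ gives that $h:=v-a_0\ge 0$ satisfies $h^{(4)}-\alpha h''-\beta(p-1)h\ge 0$; factoring $\partial_t^4-\alpha\partial_t^2-\beta(p-1)=(\partial_t^2-\sigma^2)(\partial_t^2+\tau^2)$ with $\sigma,\tau>0$ and putting $g:=h''+\tau^2 h$ turns this into $g''\ge\sigma^2 g$. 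When $v$ (hence $h,g,g'$) is bounded, an integrating-factor argument ($e^{-\sigma t}(g'+\sigma g)$ is nondecreasing and $\to 0$ at $+\infty$) gives $g\le 0$, so $h''\le-\tau^2 h\le 0$ and $h$ is a bounded nonnegative concave function on $\R$, hence constant, hence $\equiv 0$. For a general (possibly unbounded) $C^4(\R)$ solution one argues similarly by examining the sign of $g$: either $g\le 0$, and the above applies, or $g>0$ on some component, where convexity of $g$ forces at least exponential growth of $g$ and hence of $h$, which is incompatible with \eqref{eq ODE} because the superlinear term then drives finite-time blow-up.

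\emph{Part (ii).} The plan is to first isolate bounded positive solutions from the equilibria. Applying $\mathcal E[v]=\tfrac12(v'')^2+W(v)$ at a local minimum and at a local maximum of a bounded non-constant positive solution (both exist) shows that $\mathcal E[v]=W(a_0)$ would force $v$ to equal $a_0$ at both, i.e.\ $v\equiv a_0$; hence $\mathcal E[v]>W(a_0)$, so such a $v$ cannot converge to, nor be homoclinic to, the equilibria $0$ and $a_0$. The main technical input is then a monotonicity lemma: \emph{if $v$ is a bounded positive solution of \eqref{eq ODE} then $u(x)=|x|^{-\frac{n-4}2}v(\ln|x|)$ and $w:=-\Delta u$ are strictly decreasing in $|x|$.} To prove it, note $u=O(|x|^{-\frac{n-4}2})$ and $w=O(|x|^{-n/2})$ near the origin, both $o(|x|^{2-n})$ once $n\ge5$, so the origin is a removable singularity for the distributional identities $-\Delta u=w$, $-\Delta w=u^p$; since $-\Delta w=u^p>0$, $w$ is superharmonic on $\R^n$, and comparing on balls $B_R$ (where $w\to 0$ as $R\to\infty$) gives $w\ge0$, then $w>0$ by the strong maximum principle. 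Now $(|x|^{n-1}u')'=|x|^{n-1}\Delta u=-|x|^{n-1}w<0$ and $|x|^{n-1}u'=|x|^{n/2}\bigl(v'-\tfrac{n-4}2 v\bigr)(\ln|x|)\to 0$ as $|x|\to0$, so $u'<0$; likewise $(|x|^{n-1}w')'=-|x|^{n-1}u^p<0$ with $|x|^{n-1}w'\to0$ at the origin, so $w'<0$. \emph{I expect this lemma — and its use to force symmetry of \emph{every} bounded positive solution about its extrema — to be the main obstacle, and it is exactly here that the hypothesis $n\ge5$ is consumed.}

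Granting the lemma, the rest is a phase-space/shooting analysis. Monotonicity of $u$ and of $w$ translates, in Emden–Fowler variables, into the statements that $v'$ and the transform $\tfrac{n(n-4)}4 v-v''-2v'$ of $w$ each change sign at most once between consecutive critical points of $v$, so that $v$ is strictly monotone there with at most one maximum and one minimum per oscillation. Combining this with conservation of $\mathcal E$ and the time-reversal symmetry $R\colon(v,v',v'',v''')\mapsto(v,-v',v'',-v''')$ — whose fixed plane $\{v'=v'''=0\}$ the orbit of a bounded positive solution is then shown to meet — one gets, via the evenness observation, that such a solution is periodic, symmetric about each extremum, with a single maximum and minimum per period. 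For existence I would start from $v\equiv a_0$, a saddle--centre equilibrium (linearisation $\partial_t^4-\alpha\partial_t^2-\beta(p-1)$, eigenvalues $\pm\sigma,\pm i\tau$): the Lyapunov centre theorem, or a direct bifurcation argument (alternatively a variational construction of periodic solutions), yields a one-parameter family of such symmetric periodic solutions with minimal values increasing to $a_0$; by part (i) these minimal values fill an interval $(a_0-\delta,a_0)$, and a continuation argument — openness from the implicit function theorem applied to the first return to $\{v'=v'''=0\}$, closedness from uniform a priori bounds on such orbits (via Nehari/energy identities over a period) — extends the family to all $a\in(0,a_0)$, the limit as $a\to0^+$ being the Emden–Fowler transform of the radial bubble \eqref{eq:linsol} ($x_0=0$). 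Finally, uniqueness up to translation follows because, by the monotonicity lemma, a bounded positive solution with minimal value $a$ is recovered from the solution of the half-period boundary-value problem on $[0,T/2]$ (from the minimum to the maximum, both with $v'=v'''=0$), and a comparison argument shows this problem has at most one solution for each prescribed $a$.
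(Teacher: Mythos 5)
Your proposal takes a genuinely different route from the paper's, and it is useful to record where it diverges and where it leaves serious gaps.

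\textbf{Different route.} The paper's central tool is van den Berg's theorem that a \emph{bounded} entire solution of \eqref{eq ODE with f} is determined by $(v(0),v'(0))$ alone (Proposition~\ref{proposition (v,v')-uniqueness}), together with the energy ordering (Proposition~\ref{proposition energy ordering}) and the key new observation that \emph{every} global solution is automatically bounded (Lemma~\ref{bounded}, an energy argument). From these, symmetry at each critical point (Lemma~\ref{lemma symmetry}), the classification of bounded solutions (Proposition~\ref{proposition symmetry and classification}), and a standard topological shooting argument in the single parameter $\beta=v''(0)$ give Theorem~\ref{theorem existence}(ii). Part~(i) is handled by invoking \cite[Lemma~2.6]{wei} at a local minimum. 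You replace \emph{all} of this by (a) a convexity/linearisation trick to prove part~(i), and (b) a PDE monotonicity lemma (positivity and radial monotonicity of $u$ and $w=-\Delta u$) plus Lyapunov--centre/continuation and a half-period BVP for part~(ii). Your trick in (i) for the \emph{bounded} case (Taylor-expanding $s\mapsto s^p$ at $a_0$, factoring $\partial_t^4-\alpha\partial_t^2-\beta(p-1)=(\partial_t^2-\sigma^2)(\partial_t^2+\tau^2)$ and squeezing $g:=h''+\tau^2h$) is genuinely elegant and self-contained; the paper instead cites Wei--Xu for this step.

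\textbf{Gaps.} Several steps that you describe in a sentence are in fact where the real work lies; some would not go through as stated.
First, in part~(i) the unbounded case is not handled: if $g>0$ somewhere, $g''\geq\sigma^2g$ only gives (at most) exponential growth, which is perfectly compatible with a solution defined for all $t$; ``finite-time blow-up'' does not follow. The paper's Lemma~\ref{bounded} (global solutions are bounded) together with Lemma~\ref{lemma limits} (a monotone solution cannot tend to $\pm\infty$) is precisely what closes this gap, and you would need an argument of comparable strength.
Second, your monotonicity lemma for $u$ and $w$, even if granted, does not obviously deliver periodicity and symmetry of $v$ about its extrema. The translated statements $v'<\tfrac{n-4}{2}v$ and $\tfrac{n(n-4)}4v-v''-2v'>0$ (decreasing) are one-sided constraints on $(v,v',v'',v''')$; the claim that they force the orbit to hit the plane $\{v'=v'''=0\}$, and that $v$ has a single max/min per period, is not derived. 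In the paper this is exactly what Proposition~\ref{proposition (v,v')-uniqueness} (via Lemma~\ref{lemma symmetry}) is for, and you have no substitute.
Third, the continuation/closedness step in your existence argument is problematic: as $a\to0^+$ the minimal period $L_a\to\infty$ (the periodic orbits approach the homoclinic), so there is no uniform a priori bound over the family of orbits, and ``Nehari/energy identities over a period'' do not control a degenerating period. A continuation from $a_0$ would need an argument to rule out the branch turning back before reaching $a$.
Fourth, ``a comparison argument shows this [half-period] problem has at most one solution for each prescribed $a$'' is precisely the statement that the paper proves, via van den Berg's uniqueness theorem, and cannot be asserted; fourth-order problems lack the maximum principle that makes such comparisons routine. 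In short, the proposal removes the central ODE tool of the paper without providing a working replacement for the symmetry, classification, and uniqueness steps.
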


To state our second main result, we denote by $v_a$ the unique solution to \eqref{eq ODE} obtained from Theorem \ref{theorem existence} by requiring that $v_a(0) = \min_\R v_a = a$. Also, denote by $L_a$ the minimal period of $v_a$. For the constant solution $v_{a_0}\equiv a_0$, we set $L_{a_0}=0$. 

The following theorem provides a classification of positive solutions $u$ of \eqref{equation on Rn} with non-removable singularities in terms of a two-parameter family.\\

\begin{theorem} \label{theorem periodic}
Let $u \in C^4(\R^n \setminus \{0\})$ be a positive solution of \eqref{equation on Rn} whose singularity at the origin is non-removable. Then there are $a \in (0, a_0]$ and $L \in [0, L_a]$ such that
\[ u(x) = |x|^{-\frac{n-4}{2}} v_a(\log |x| + L) \,, \]
where $v_a$ is the solution of \eqref{eq ODE} introduced after Theorem \ref{theorem existence}.
\end{theorem}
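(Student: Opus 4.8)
The strategy is to reduce \eqref{equation on Rn} to the ordinary differential equation \eqref{eq ODE} and then to invoke Theorem~\ref{theorem existence}; the only genuinely new ingredient, and the main obstacle, is an a priori bound.

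\emph{Reduction to the ODE.} By Lin's symmetry theorem \cite{lin}, a positive solution $u$ of \eqref{equation on Rn} with a non-removable singularity at the origin is radially symmetric about $0$, so that $u(x) = |x|^{-(n-4)/2} v(\ln |x|)$ with $v \in C^4(\R)$, $v > 0$, and, by the computation recalled in the introduction, $v$ solves \eqref{eq ODE} on all of $\R$. It therefore suffices to show that such a $v$ is bounded and, if non-constant, attains a positive minimum: if $v$ is a positive constant then $\inf_\R v = a_0$, so Theorem~\ref{theorem existence}(i) forces $v \equiv a_0$, which is the case $a = a_0$, $L = 0$; if $v$ is non-constant with $a := \min_\R v$, then $a \in (0, a_0)$ by Theorem~\ref{theorem existence}(i) and Theorem~\ref{theorem existence}(ii) gives $v(t) = v_a(t + L_0)$ for some $L_0 \in \R$, whence $u(x) = |x|^{-(n-4)/2} v_a(\ln|x| + L_0)$; replacing $L_0$ by its representative $L \in [0, L_a]$ modulo the period $L_a$ then finishes the proof.

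\emph{The a priori bound.} The heart of the matter is that $v$ is bounded, equivalently $|x|^{(n-4)/2} u(x) \le C$ for $|x|$ near $0$ and near $\infty$. This is exactly where the lack of a maximum principle for $\Delta^2$ bites, since \eqref{eq ODE} has many unbounded solutions and positivity of $u$ has to be used decisively; I expect this step to be the main obstacle. One route is a rescaling argument: were $\sup_{|x| = r} |x|^{(n-4)/2} u(x)$ unbounded along a sequence $r_k \to 0$, then, choosing points $x_k$ by a doubling lemma in the spirit of Pol\'a\v{c}ik--Quittner--Souplet and rescaling $u$ around $x_k$ by an appropriate negative power of $u(x_k)$, one would obtain in the limit a positive entire solution of $\Delta^2 U = U^{(n+4)/(n-4)}$ on $\R^n$; comparing its profile with the growth of $u$ on dyadic annuli around the origin and invoking the classification of entire solutions (\cite{lin, Lieb, wei-xu}) yields a contradiction. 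Alternatively, working directly with the radial ODE, one sets $w = \Delta u$ and notes that $(r^{n-1} w')' = r^{n-1} u^{(n+4)/(n-4)} \ge 0$, so that $r^{n-1} w'$ is monotone, and then runs the classical shooting/monotonicity analysis for radial biharmonic functions (in the circle of ideas of \cite{ChGuYa, UhVi}) to bound $u$ and $\Delta u$ near $0$. The bound near $\infty$ follows by applying the bound near $0$ to the Kelvin transform $|x|^{4-n} u(x/|x|^2)$, which solves the same equation.

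\emph{Conclusion.} Once $v$ is bounded, the right-hand side of \eqref{eq ODE} is bounded, so differentiating the equation shows that $v', v'', v''', v^{(4)}$ are bounded as well. Suppose $v$ is non-constant; then $\inf_\R v < a_0$ by Theorem~\ref{theorem existence}(i). Choose $t_k$ with $v(t_k) \to \inf_\R v =: a$; one checks $v'(t_k) \to 0$ (otherwise $v$ would dip below $a$ just before $t_k$, contradicting $v \ge a$), and $C^4_{\mathrm{loc}}$-compactness of the translates $v(\,\cdot + t_k)$ produces a bounded solution $\bar v \ge a$ with $\bar v(0) = a = \min_\R \bar v$. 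Moreover $a > 0$: a bounded positive solution of \eqref{eq ODE} with vanishing infimum would be the homoclinic orbit, which under the Emden--Fowler change of variables is the Lin solution \eqref{eq:linsol} centered at the origin, hence a removable singularity --- a contradiction. Therefore $a \in (0, a_0)$, Theorem~\ref{theorem existence}(ii) identifies $\bar v$ with a translate of $v_a$, and the conservation of the Hamiltonian of \eqref{eq ODE} together with the uniqueness in Theorem~\ref{theorem existence}(ii) forces $v$ itself to be a translate of $v_a$. Combined with the constant case, this yields the asserted representation with $a \in (0, a_0]$ and $L \in [0, L_a]$.
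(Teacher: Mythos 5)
Your overall skeleton is right — radial symmetry from Lin, Emden--Fowler reduction, exclusion of the homoclinic orbit because it gives a removable singularity, and then an appeal to Theorem~\ref{theorem existence} — but there are two real gaps, one of which is exactly the step you yourself flag as the ``main obstacle.''

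First, you never actually prove boundedness of $v$; you only sketch two possible PDE-flavored routes (a doubling/rescaling argument and a radial shooting/Kelvin argument) without carrying either out. The paper resolves this much more economically and at the pure ODE level: Lemma~\ref{bounded} shows that \emph{every} $C^4(\R)$ solution of \eqref{eq ODE with f} is bounded, using nothing but energy conservation and the growth $F(v)\to\infty$ as $|v|\to\infty$ (if $v'$ has no large zeros, apply Lemma~\ref{lemma limits}; if $v'$ has zeros with $|v|$ large, the energy $\frac12 v''^2+F(v)$ at such a point would exceed the conserved value). In particular the paper's argument does not even need positivity of $u$, whereas both of your suggested routes would lean heavily on it, and the doubling-lemma route in particular would be a nontrivial project in its own right rather than a lemma. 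Once you have boundedness, the crucial structural input is Proposition~\ref{proposition symmetry and classification}, which classifies all bounded solutions as (a) constant, (b) a translate of $\pm c_n(2\cosh)^{-(n-4)/2}$, or (c) periodic and symmetric about its extrema; your proposal never invokes this classification, which is why you then have to improvise.

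Second, your ``conclusion'' step does not close. You build $\bar v$ as a $C^4_{\rm loc}$ limit of translates $v(\cdot+t_k)$, identify $\bar v$ with a translate of $v_a$ via Theorem~\ref{theorem existence}(ii), and then assert that ``conservation of the Hamiltonian together with uniqueness forces $v$ itself to be a translate of $v_a$.'' But the uniqueness statement (Proposition~\ref{proposition (v,v')-uniqueness}) requires $v$ and $v_a$ to agree to first order at a common \emph{finite} time; agreement only in the limit $t_k\to\infty$ does not give this, and you never show that $v$ attains its infimum. In the paper this issue simply does not arise: once $v$ is bounded, Proposition~\ref{proposition symmetry and classification} says $v$ is constant, homoclinic, or periodic; the homoclinic case is ruled out by the removability of the singularity; if $v$ is periodic it certainly attains its minimum $a$ at some point, and Theorem~\ref{theorem existence}(ii) applies directly. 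Replacing your compactness detour by a direct appeal to the classification proposition both fixes the gap and removes the extra machinery.

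Minor points: ``if $v$ is a positive constant then $\inf_\R v = a_0$'' presupposes what you want to show (the only positive constant solution is $a_0$; this follows from $f$ having zeros only at $0,\pm a_0$), and the claim that $v'(t_k)\to 0$ ``otherwise $v$ would dip below $a$'' is not quite a proof as written — one should pick near-minimizers that are approximate local minima. Both are easily repaired but should be spelled out.
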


This theorem answers an open question raised in \cite{wei} and shows, in particular, that the positivity of the scalar curvature in their conjecture is not necessary.

It is easy to see that as $a\to 0$ one has $L_a\to\infty$ and $v_a(t+ L_a/2) \to c_n (2\cosh t)^{-\frac{n-4}{2}}$. Undoing the logarithmic change of variables we therefore recover the non-singular solution \eqref{eq:linsol} in the limit $a\to 0$.

We believe that Theorems \ref{theorem existence} and \ref{theorem periodic} will have several applications. Firstly, it should be a key step in describing the asymptotic behavior near the origin of positive solutions $u$ of $\Delta^2 u = u^{\frac{n+4}{n-4}}$ in a punctured ball $\{ 0 <|x|<\rho\}$. This would be the fourth order analogue of a celebrated result of Caffarelli--Gidas--Spruck \cite{caffarelli}; see also \cite{korevaar}. Secondly, we believe that our theorems will prove useful in the construction of constant $Q$-curvature metrics with isolated singularities in the spirit of the classical works \cite{schoen,mazzeo-pacard} for the scalar curvature; see \cite{BaRe,GuWeZh} for results in this direction in the fourth order case. For an introduction to the $Q$-curvature problem see, for instance, \cite{HaYa}.

We end this introduction by comparing the statement and proof of Theorems \ref{theorem existence} and \ref{theorem periodic} with their second order counterpart, which concerns positive solutions $u$ of
\begin{equation}
\label{eq:secondorder}
-\Delta u = u^\frac{n+2}{n-2}
\qquad\text{in}\ \R^n\setminus\{0\} 
\end{equation}
for $n\geq 3$. A famous result of Caffarelli--Gidas--Spruck \cite{caffarelli} says that if this equation is valid on all of $\R^n$, then
$$
u(x) = c_n' \left( \frac{\lambda}{1+ \lambda^2 |x-x_0|^2} \right)^\frac{n-2}{2} \,,
\qquad
c_n' = \left( n(n-2) \right)^{\frac{n-2}{4}} \,,
$$
for some $\lambda>0$ and $x_0\in\R^n$. Moreover, they show that if $u$ is a positive solution of \eqref{eq:secondorder} with a non-removable singularity, then $u$ is radial. Using this information, Schoen \cite{schoen2} observed that all solutions can be classified by standard phase-plane analysis. Indeed, setting
$$
u(x) = |x|^{-\frac{n-2}{2}} v(\ln|x|)
$$
one obtains
$$
- v'' + \frac{(n-2)^2}{4} v - v^{\frac{n+2}{n-2}} = 0 
\qquad\text{in}\ \R
$$
and the positive solutions of this equation are given by the constant $(\frac{n-2}{2})^\frac{n-2}{2}$, by the homoclinic solution $c_n' (2\cosh(t+T))^\frac{n-2}{2}$ and by periodic solutions uniquely parametrized, up to translations, by their minimal value in $(0, (\frac{n-2}{2})^\frac{n-2}{2} )$. Moreover, these periodic solutions have a unique local maximum and minimum per period and are symmetric with respect to their local extrema.

Thus, our Theorems \ref{theorem existence} and \ref{theorem periodic} provide exactly the same conclusions as in the second order case. Their proof, however, is considerably more difficult, because the phase `plane' in the fourth order case is four-dimensional. Moreover, solutions to fourth order equations show, in general, a much richer and typically more erratic behavior than solutions to second order equations; see, e.g., the introduction of the textbook \cite{peletier} for examples. To emphasize the structure of our equation we abbreviate
\begin{equation}
\label{eq:defabp}
A =  \frac{n(n-4)+8}{2} \,,
\qquad
B = \frac{n^2(n-4)^2}{16}  \,,
\qquad
p= \frac{n+4}{n-4} \,,
\end{equation}
and
\begin{equation} \label{eq definition of f}
f(v) = |v|^{p-1} v - B v
\end{equation}
and rewrite \eqref{eq ODE} as
\begin{equation}
\label{eq ODE with f}
v^{(4)}- Av'' - f(v) = 0
\qquad\text{in}\ \R \,.
\end{equation}
Of fundamental importance for us is that the coefficients $A$ and $B$ in \eqref{eq ODE} satisfy the inequalities
\begin{equation}
\label{eq:abineq}
A > 0
\qquad\text{and}\qquad
4B < A^2 \,.
\end{equation}
The second of these inequalities guarantees that the characteristic equation $\xi^4 - A \xi^2 + B =0$ associated to the linearization of \eqref{eq ODE with f} around the zero solution has four distinct, real solutions. The picture that has emerged from the analysis of fourth order equations is that under this structural assumption solutions are better behaved than those of general fourth order equations and resemble in some sense solutions to second order equations; see, e.g., \cite{peletier,vdb,BuChTo}. The reason is that certain techniques are available which are reminiscent of the maximum principle. Technically, this better, `second-order'-like behavior can be proved for \emph{bounded} solutions of the equation and for such solutions there are certain substitutes for two-dimensional phase plane arguments (see, in particular, \ref{proposition (v,v')-uniqueness} and \ref{proposition energy ordering}). Parts of our analysis will rely on results of van den Berg \cite{vdb} for bounded solutions, which in turn rely on results of Buffoni--Champneys--Toland \cite{BuChTo}. Our crucial new ingredient, however, which does not appear in these works, is that \emph{global solutions are necessarily bounded} (Lemma \ref{bounded}). We emphasize that boundedness is a non-local property and breaks the local character of the ODE analysis.

Most of our results (except for the explicit expression of the homoclinic solution) hold, mutatis mutandis, for any equation of the form \eqref{eq ODE with f} with $f$ given by \eqref{eq definition of f}, where $p>1$ is arbitrary and $A$ and $B$ are arbitrary subject to \eqref{eq:abineq}.

\textbf{Acknowledgement.} Partial support through US National Science Foundation grant DMS-1363432 (R.L.F.) is acknowledged.

%%%%%%%%%%%%%%%%%%%

\section{Classification of global ODE solutions}

In this section we will classify all bounded solutions $v$ of \eqref{eq ODE with f}. Positivity will not play a role here.

We begin with some preliminary remarks, which we will use several times below. The function $v\mapsto f(v)$ in \eqref{eq definition of f} has exactly three zeros, namely, at $0$ and at $\pm B^{\frac{1}{p-1}} =\pm a_0$. These correspond to exactly three constant solutions. Moreover, if $v(t)$ is a solution to \eqref{eq ODE with f}, then so are the functions
\begin{itemize}
\item $v(-t)$ (because \eqref{eq ODE with f} contains only even-order derivatives),
\item $-v(t)$ (because $f$ is odd) and
\item $v(t+T)$ for any $T \in \R$ (because \eqref{eq ODE with f} is autonomous).
\end{itemize}

We now state the main result of this section.\\

\begin{proposition}\label{proposition symmetry and classification}
Let $v \in C^4(\R)$ be a solution of \eqref{eq ODE with f}.
Then one of the following three alternatives holds:
\begin{enumerate}
\item[(a)] $v \equiv \pm B^{\frac{1}{p-1}}$, or $v \equiv 0$,
\item[(b)] $v(t) = \pm c_n (2 \cosh(t-T))^{-\frac{n-4}{2}}$ for some $T \in \R$ with $c_n$ from \eqref{eq:linsol},
\item[(c)] $v$ is periodic, has a unique local maximum and minimum per period and is symmetric with respect to its local extrema.\\
\end{enumerate}
\end{proposition}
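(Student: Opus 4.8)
The plan is to first remove the global hypothesis by boundedness, then to run a reflection argument that plays the role of the second-order phase-plane analysis.

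\emph{Step 1 (reduction to bounded solutions and the conserved energy).} By Lemma~\ref{bounded} every $v\in C^4(\R)$ solving \eqref{eq ODE with f} is bounded, and then $v',v'',v''',v^{(4)}$ are bounded as well (differentiate \eqref{eq ODE with f} and use interior estimates). Multiplying \eqref{eq ODE with f} by $v'$ shows that
\[
\mathcal H[v] \;=\; v'v''' - \tfrac12 (v'')^2 - \tfrac A2 (v')^2 - F(v),
\qquad F(s) = \tfrac{1}{p+1}|s|^{p+1} - \tfrac B2 s^2,
\]
is constant along solutions; it is moreover invariant under $t\mapsto -t$ (each of $v'v'''$, $(v'')^2$, $(v')^2$, $F(v)$ is even under this reflection). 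The three zeros $0,\pm B^{1/(p-1)}=\pm a_0$ of $f$ give exactly the constant solutions, which is alternative (a). From now on let $v$ be bounded and non-constant.

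\emph{Step 2 (every critical point is a point of even symmetry).} Fix $t_0$ with $v'(t_0)=0$ and set $w(t):=v(2t_0-t)$. Then $w$ is again a bounded solution of \eqref{eq ODE with f}, with $w(t_0)=v(t_0)$, $w'(t_0)=0=v'(t_0)$ and $\mathcal H[w]=\mathcal H[v]$ by the reflection invariance. Hence Proposition~\ref{proposition (v,v')-uniqueness} gives $w\equiv v$, i.e.\ $v$ is even about $t_0$. For a non-constant $v$ this has three consequences: (i) every critical point is a \emph{strict} local extremum, since $v'$ is odd about it (so $v'$ changes sign there, unless $v$ were locally, hence globally, constant); in particular no degenerate critical points with $v''=0$ and $v'$ of constant sign can occur; (ii) the critical points are isolated, for if they accumulated at some $t^*$ a Rolle-theorem cascade would force $v'(t^*)=v''(t^*)=v'''(t^*)=0$, and then $v^{(4)}(t^*)=f(v(t^*))=0$ by \eqref{eq ODE with f}, whence $v$ equals a constant solution by ODE uniqueness --- a contradiction; (iii) between two consecutive critical points $v$ is strictly monotone, so maxima and minima alternate.

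\emph{Step 3 (the trichotomy).} If $v$ has at least two critical points, pick two consecutive ones $t_0<t_1$; composing the reflections about $t_0$ and about $t_1$ yields $v(t+2(t_1-t_0))=v(t)$, so $v$ is periodic, its period contains exactly the two critical points $t_0$ (one extremum) and $t_1$ (the other), and $v$ is symmetric about each --- alternative (c). If $v$ has no critical point it is strictly monotone and bounded, so $v(\pm\infty)$ exist and (integrating $v',v'',v'''$ successively and using \eqref{eq ODE with f}) are distinct zeros of $f$; then $\mathcal H[v]=-F(v(+\infty))=-F(v(-\infty))$ forces equal $F$-values, and since $F$ is even with $F(0)=0>F(\pm a_0)$ the only possibility would be a heteroclinic between $-a_0$ and $a_0$, which is excluded by Proposition~\ref{proposition energy ordering} (and \cite{vdb,BuChTo}); so this case does not occur. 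In the remaining case $v$ has exactly one critical point $t_0$, is even about it, and monotone on $(t_0,\infty)$, so $v(+\infty)=v(-\infty)=:\ell$ is a zero of $f$; evaluating $\mathcal H$ at $t_0$ gives $\tfrac12 v''(t_0)^2=F(\ell)-F(v(t_0))$, and since $F(s)\ge F(\pm a_0)$ for all $s$ with equality only at $s=\pm a_0$, the choice $\ell=\pm a_0$ would force $v''(t_0)=0$ and $v(t_0)=\pm a_0$, hence $v$ constant --- impossible. Thus $\ell=0$: $v$ is homoclinic to $0$ and, being monotone on each side of $t_0$ with limit $0$, has a fixed sign; replacing $v$ by $-v$ if necessary we may assume $v>0$.

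\emph{Step 4 (identifying the homoclinic) and the main obstacle.} For the positive homoclinic, $u(x):=|x|^{-\frac{n-4}{2}}v(\ln|x|)$ is a radial positive solution of \eqref{equation on Rn}. Because $4B<A^2$ in \eqref{eq:abineq} one computes $A^2-4B=4(n-2)^2$, so the linearization of \eqref{eq ODE with f} at $0$ has the four distinct real characteristic roots $\pm\frac{n-4}{2},\pm\frac n2$; consequently $v(t)=O(e^{\frac{n-4}{2}t})$ as $t\to-\infty$, so $u$ is bounded near the origin. By the regularity theory of \cite{UhVi} the singularity of $u$ is then removable and $u$ extends to a positive solution in $C^4(\R^n)$; Lin's classification \cite{lin}, together with the radiality of $u$ (which fixes the center at the origin), gives $u(x)=c_n\left(\lambda/(1+\lambda^2|x|^2)\right)^{\frac{n-4}{2}}$, i.e.\ $v(t)=c_n(2\cosh(t-T))^{-\frac{n-4}{2}}$ with $T=-\ln\lambda$ --- alternative (b) (up to the sign removed above). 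The hard part is the fourth-order phase-plane analysis underlying Steps~2--3: there is no genuine two-dimensional phase plane, and one must exclude degenerate or non-alternating extrema, heteroclinics, and bounded non-periodic (``erratic'') orbits; this is precisely where the structural inequality $4B<A^2$, Propositions~\ref{proposition (v,v')-uniqueness} and \ref{proposition energy ordering}, and van den Berg's results \cite{vdb,BuChTo} are indispensable. I also expect the decisive new input to be Lemma~\ref{bounded}: boundedness is a non-local property that the purely local ODE analysis cannot by itself provide, and it is what lets all of the above machinery apply to \emph{every} global solution.
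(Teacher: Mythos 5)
Your proof is correct and follows the same overall architecture as the paper's argument (reduce to bounded solutions via Lemma~\ref{bounded}, derive reflection symmetry at critical points from Proposition~\ref{proposition (v,v')-uniqueness}, then run a trichotomy on the number of critical points), but several sub-arguments are genuinely different and worth comparing. First, you establish that critical points are isolated via a Rolle cascade forcing $v'(t^*)=v''(t^*)=v'''(t^*)=v^{(4)}(t^*)=0$ at a would-be accumulation point and then $f(v(t^*))=0$ from the ODE; the paper avoids this entirely by merely observing that, for non-constant $v$, the set $Z$ of critical points is not dense, so one can pick two consecutive ones. Second, you exclude the heteroclinic between $\pm a_0$ by Proposition~\ref{proposition energy ordering}; this works (compare $v$ with $w\equiv 0$ at the zero $t_0$ of $v$, getting $\mathcal E_v>\mathcal E_w=0$ while $\mathcal E_v=F(a_0)<0$), and is arguably cleaner than the paper's use of the inequality $\mathcal E_v\geq \tfrac12 v''^2+F(v)$ from \cite[Corollary~6]{vdb}, though you should spell out the comparison with $w\equiv 0$ rather than just cite the proposition. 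Third, and most substantively: to identify the homoclinic you go back to the PDE level, showing that exponential decay $v=O(e^{\frac{n-4}{2}t})$ at $t\to-\infty$ (this rests on stable/unstable manifold theory at the hyperbolic equilibrium $0$, which deserves a sentence) makes $u$ bounded near the origin, hence the singularity is removable and Lin's full classification on $\R^n$ applies. The paper instead proves a self-contained ODE uniqueness lemma for the positive homoclinic (Lemma~\ref{lemma unique homoclinic}, via an integration-by-parts identity) and checks directly that $c_n(2\cosh t)^{-\frac{n-4}{2}}$ solves \eqref{eq ODE}. Your route is conceptually appealing — it exactly matches the homoclinic with the removable-singularity case — but it imports the full strength of Lin's theorem into the proof of Proposition~\ref{proposition symmetry and classification}, which the paper keeps purely at the ODE level; also, unlike the paper's lemma, your argument does not by itself give uniqueness of the positive homoclinic independently of the explicit form, a statement which has independent interest and is re-used elsewhere. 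Finally, your proof of the one-critical-point case also silently uses that $v'''(t_0)=0$ (which does follow from even symmetry about $t_0$) to conclude $v$ is constant when $\ell=\pm a_0$; make that explicit.
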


For the proof of this proposition we will need two results, taken from \cite{vdb}, which quantify that bounded solutions to the fourth order equation \eqref{eq ODE with f} behave in some respects similar to solutions of a second order equation. As we pointed out in the introduction, for this it is crucial that the relation $4B<A^2$ is satisfied. The first result is that every \emph{bounded} entire solution $v$ is uniquely determined by only \emph{two} (instead of four) initial values.\\

\begin{proposition}[Theorem 1 in \cite{vdb}] \label{proposition (v,v')-uniqueness} 
Let $v, w \in C^4(\R)$ be bounded solutions of \eqref{eq ODE with f} and suppose that $v(0) = w(0)$ and $v'(0) = w'(0)$. Then $v \equiv w$. 
\end{proposition}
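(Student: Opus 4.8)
The plan is to argue via the difference $\phi := v - w$. Since $v$ and $w$ are bounded, interior estimates for the constant‑coefficient (elliptic) operator $\partial^4 - A\partial^2$, applied on unit intervals, show that $\phi$ together with its derivatives up to order four is bounded on all of $\R$; and by the mean value theorem $\phi$ solves the linear equation $\phi^{(4)} - A\phi'' - c(t)\phi = 0$ with $c(t) = \int_0^1 f'(sv(t)+(1-s)w(t))\,ds$ bounded. The decisive structural fact, by \eqref{eq definition of f}, is that $c(t) + B = p\int_0^1 |sv(t)+(1-s)w(t)|^{p-1}\,ds \ge 0$ for every $t$. Since $\phi \in C^4$ solves a linear ODE with continuous coefficients, it suffices to prove $\phi \equiv 0$ on the half‑line $[0,\infty)$: then $\phi$ and its first three derivatives vanish at $t = 1$ and ordinary ODE uniqueness gives $\phi \equiv 0$ on $\R$. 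Thus the task reduces to showing that a bounded solution of $\phi^{(4)} - A\phi'' - c(t)\phi = 0$ on $[0,\infty)$ with $\phi(0) = \phi'(0) = 0$ must vanish identically.

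Here the hypothesis $4B < A^2$ from \eqref{eq:abineq} enters. It lets one factor $\xi^4 - A\xi^2 + B = (\xi^2 - \mu^2)(\xi^2 - \nu^2)$ with $0 < \nu < \mu$ real, where $\mu^2 + \nu^2 = A$ and $\mu^2\nu^2 = B$, so that, setting $\psi := \phi'' - \nu^2\phi$, the equation turns into the cascade
\[ \psi'' - \mu^2\psi = \bigl(c(t)+B\bigr)\phi, \qquad \phi'' - \nu^2\phi = \psi \qquad \text{on } [0,\infty). \]
The two second‑order operators $\mu^2 - \partial^2$ and $\nu^2 - \partial^2$ have \emph{no} nonzero bounded solution, and this is exactly the ``second‑order‑like'' behavior that real roots provide: on $[0,\infty)$ the bounded solution of $\psi'' - \mu^2\psi = g$ (with $g$ bounded) taking a prescribed value at $0$ is \emph{unique}, and is given by an explicit formula involving the \emph{positive} Green's function $G_\mu^+(t,s) = \tfrac{1}{2\mu}\bigl(e^{-\mu|t-s|} - e^{-\mu(t+s)}\bigr)$, and similarly for $\nu$. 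This positivity is the substitute for the maximum principle referred to in the introduction.

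Next I would feed the two initial conditions through the cascade. Because $\phi(0) = 0$ one obtains $\phi(t) = -\int_0^\infty G_\nu^+(t,s)\psi(s)\,ds$; because $\psi(0) = \phi''(0) =: \alpha$ one obtains $\psi(t) = \alpha e^{-\mu t} - \int_0^\infty G_\mu^+(t,s)(c(s)+B)\phi(s)\,ds$. Composing these, $\phi$ satisfies on $[0,\infty)$ a linear integral equation of the form $\phi = -\alpha\, h + T\phi$, where $h \ge 0$ and the operator $T$ has a \emph{nonnegative} kernel built from $G_\nu^+$, $G_\mu^+$ and $c+B$, and $\alpha = \phi''(0)$ is the only free parameter. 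Differentiating this identity at $t=0$ and using $\phi'(0) = 0$ turns $\alpha$ into a fixed multiple of an integral of $(c+B)\phi$ against a nonnegative kernel. So everything reduces to showing that the only bounded solution on $[0,\infty)$ of $(I-T)\phi = -\alpha\, h$ subject to this second relation is $\phi \equiv 0$; equivalently, that there is no nonzero bounded solution on $[0,\infty)$ with a double zero at the origin.

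The main obstacle — and the heart of the matter, genuinely using $4B < A^2$ rather than merely $A > 0$ — is that the coefficient $c+B = p\int_0^1 |\cdots|^{p-1}\,ds$ is bounded but \emph{not} small: neither the invertibility of $I - T$ nor the sign of the second relation can be read off from operator norms, and the exponential dichotomy of the constant‑coefficient part is too large a perturbation away to be usable by a roughness argument. The way through is to exploit the positivity of \emph{all} the kernels together with $c+B \ge 0$. As an illustration of the mechanism: if $\phi$ happened to be nonnegative, then $\psi = -\tfrac{1}{2\mu}e^{-\mu|\cdot|}*\bigl((c+B)\phi\bigr)$ would be $\le 0$ on $\R$, hence $\phi = -\tfrac{1}{2\nu}e^{-\nu|\cdot|}*\psi$ would be $\ge 0$ with $\phi(0) = 0$, and strict positivity of the convolution kernel forces $\psi \equiv 0$ and then $\phi \equiv 0$. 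In the general, sign‑changing case this must be replaced by a sweeping/comparison argument, tracking the signs of $\psi$ and $\phi$ as one slides a suitable comparison function along $[0,\infty)$ and using repeatedly that $\mu^2 - \partial^2$ and $\nu^2 - \partial^2$ send nonpositive right‑hand sides to nonpositive bounded solutions. This is the step I expect to demand the most care, and it is precisely the ``second‑order‑like'' behavior of bounded solutions that $4B < A^2$ buys.
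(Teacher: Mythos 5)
Your proposal sets up much of the right machinery — the difference $\phi = v - w$ solves a linear fourth order equation with coefficient $c(t)$, the monotonicity of $u \mapsto |u|^{p-1}u$ gives $c+B \ge 0$, and $4B < A^2$ lets one factor $\xi^4 - A\xi^2 + B$ into two real second-order operators with positive Green's functions. But the proof is not finished: you yourself flag that the argument only closes if $\phi$ has a sign, and the ``sweeping/comparison argument'' you promise for the sign-changing case is precisely the part that would constitute a proof, yet it is left as a hope rather than carried out. As written there is therefore a genuine gap at the decisive step.

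The paper's proof circumvents the sign-changing difficulty by a simple normalization which your scheme forgoes. Since $v(0)=w(0)$ and $v'(0)=w'(0)$, one may swap $v$ and $w$ to get $v''(0) \ge w''(0)$, and then replace $t$ by $-t$ (which leaves the first two conditions and the ordering of $v'',w''$ at $0$ intact) to also get $v'''(0) \ge w'''(0)$. If $v \nequiv w$ this forces $v > w$ on some $(0,\sigma)$, so the sign is pinned down \emph{a priori}. The factorization is then applied to $v$ and $w$ separately rather than to the difference: with $\lambda < \mu$ the two roots, set $\phi = v'' - \lambda v$ and $\psi = w'' - \lambda w$, so $\zeta := \phi - \psi$ satisfies $\zeta'' - \mu\zeta = |v|^{p-1}v - |w|^{p-1}w > 0$ on $(0,\sigma)$ with $\zeta(0) \ge 0$, $\zeta'(0) \ge 0$. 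The explicit variation-of-constants formula on $[0,t]$ (with nonnegative $\cosh$/$\sinh$ kernels) gives $\zeta > 0$, hence $(v-w)'' \ge \lambda(v-w) > 0$, hence $v-w$ is strictly increasing; a continuity bootstrap extends this to all of $(0,\infty)$, and then $(v-w)'$ is strictly increasing there, which forces $v - w \to +\infty$, contradicting boundedness. No global Green's function on $[0,\infty)$, no Fredholm/invertibility question, and no sign-changing case ever arises: it is dispatched at the outset by the symmetry reduction. That reduction is the missing idea in your argument.
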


Since this result is of crucial importance for us, we give a (slightly more direct) proof with our notation in an appendix.

In order to state the second result, we introduce 
\[
F(v) = \int_0^v f(s) \diff s = \frac{|v|^{p+1}}{p+1} - \frac{B}{2} v^2
\] 
as well as the following quantity, also referred to as \emph{energy},
\[ 
\mathcal E_v(t) = - v'''(t)v'(t) + \frac{1}{2} \big(v''(t)\big)^2 + \frac{A}{2} \big( v'(t) \big)^2 + F(v(t)) \,. 
\]
Using equation \eqref{eq ODE with f} one easily finds that for every solution $v$ of \eqref{eq ODE with f} $\frac{\diff}{\diff t} \mathcal E_v(t) = 0$, that is, the \emph{energy is conserved}. We emphasize that this conservation is a local property and valid on the maximal interval of existence and does not require any a-priori boundedness assumptions like Proposition \ref{proposition (v,v')-uniqueness} and the following Proposition \ref{proposition energy ordering} and Lemma \ref{lemma limits2}.

The second result says that, as in the second order case, the energy is a parameter which orders bounded solutions in the $(v, v')$-phase plane.\\

\begin{proposition}[Theorem 2 in \cite{vdb}] \label{proposition energy ordering}
Let $v, w  \in C^4(\R)$ be bounded solutions of \eqref{eq ODE with f} with $v(0) = w(0)$ and either $v'(0) > w'(0) \geq 0$ or $v'(0)<w'(0)\leq 0$. Then $\mathcal E_v > \mathcal E_w$.
\end{proposition}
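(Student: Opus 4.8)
\emph{Proof strategy.} I would argue by contradiction, supposing $\mathcal E_v \le \mathcal E_w$, and exploit the ``maximum-principle substitutes'' that the structural inequality $4B < A^2$ (see \eqref{eq:abineq}) makes available. Since the energy involves only the combinations $v'''v'$, $(v'')^2$, $(v')^2$ and $F(v)$, it is invariant under $t \mapsto -t$; replacing $v, w$ by $v(-\cdot), w(-\cdot)$ thus exchanges the two alternatives in the hypothesis, so I may assume $v'(0) > w'(0) \ge 0$. Writing \eqref{eq ODE with f} as $v^{(4)} - Av'' + Bv = |v|^{p-1}v$ and using that $s \mapsto |s|^{p-1}s$ is nondecreasing, the difference $z := v - w$ solves
\[
z^{(4)} - Az'' + Bz = \tilde q(t)\, z \,, \qquad \tilde q(t) := p\int_0^1 |(1-s)w(t) + s v(t)|^{p-1}\,\diff s \ge 0 \,,
\]
with $z$ bounded (hence, by the equation, with $z', z'', z''', z^{(4)}$ bounded as well), $z(0) = 0$ and $z'(0) = v'(0) - w'(0) > 0$; in particular $z \not\equiv 0$ by Proposition \ref{proposition (v,v')-uniqueness}. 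Because $4B < A^2$, the symbol factors as $\xi^4 - A\xi^2 + B = (\xi^2 - \mu^2)(\xi^2 - \nu^2)$ with real $\mu > \nu > 0$, and the operator $(\partial^2 - \mu^2)(\partial^2 - \nu^2)$ has a bounded inverse on $L^\infty(\R)$, given by convolution with the strictly positive kernel $G(t) = \tfrac{1}{2(\mu^2 - \nu^2)}\big(\nu^{-1}e^{-\nu|t|} - \mu^{-1}e^{-\mu|t|}\big)$. Hence $z = G * (\tilde q\, z)$; evaluating at $t = 0$ gives $\int_{\R} G(s)\,\tilde q(s)\, z(s)\,\diff s = 0$, which, since $G > 0$, $\tilde q \ge 0$ and $z \not\equiv 0$, forces $z$ to change sign.

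The real difficulty is the role of the higher derivatives. Since $v(0) = w(0)$,
\[
\mathcal E_v - \mathcal E_w = \mathcal E_v(0) - \mathcal E_w(0) = \tfrac A2\big((v'(0))^2 - (w'(0))^2\big) + \tfrac12\big((v''(0))^2 - (w''(0))^2\big) - v'''(0)v'(0) + w'''(0)w'(0)\,,
\]
and only the first summand — which equals $\tfrac A2 (v'(0) - w'(0))(v'(0) + w'(0)) > 0$ — is controlled by the data $(v(0), v'(0))$; the other terms involve $v''(0), v'''(0)$ (and $w''(0), w'''(0)$), which a bounded solution does not determine locally. I would therefore not work at a single point, but use the global identity obtained by expanding $\mathcal E_{w+z} - \mathcal E_w$: for every $t$,
\[
\mathcal E_v - \mathcal E_w = \mathcal I(t) + \mathcal E_z^{\mathrm{lin}}(t) + \mathcal N(t)\,,
\]
where $\mathcal I = -w'''z' - z'''w' + w''z'' + A w'z' + f(w)z$; where $\mathcal E_z^{\mathrm{lin}} = -z'''z' + \tfrac12 (z'')^2 + \tfrac A2 (z')^2 - \tfrac B2 z^2$ is the energy of the \emph{linear} equation $(\partial^2-\mu^2)(\partial^2-\nu^2)z = 0$, which for our $z$ is not conserved but satisfies $(\mathcal E_z^{\mathrm{lin}})' = -\tilde q\, z z'$; and where $\mathcal N = \tfrac{1}{p+1}\big(|w+z|^{p+1} - |w|^{p+1}\big) - |w|^{p-1}w\, z \ge 0$ by convexity of $s \mapsto |s|^{p+1}/(p+1)$. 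Feeding the representation $z = G*(\tilde q\, z)$ and the boundedness of $z$ into this identity is the mechanism by which the otherwise-free higher-derivative terms ought to get pinned down.

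I expect this last step to be the main obstacle: converting global boundedness — via the positivity of $G$, which is exactly where $4B < A^2$ is used — into an estimate showing that the uncontrolled cross terms above cannot cancel $\tfrac A2((v'(0))^2 - (w'(0))^2)$. This is the heart of the matter and the place where the ``second-order''-like comparison structure for bounded solutions (the content of \cite{vdb}) must be deployed in earnest. It is also precisely where the one-sided hypothesis $w'(0) \ge 0$ is needed: without it one could take $w = v(-\cdot)$, whence $\mathcal E_v = \mathcal E_w$, so some sign restriction is unavoidable. An alternative worth pursuing is more geometric: Proposition \ref{proposition (v,v')-uniqueness} together with the autonomy of \eqref{eq ODE with f} makes the orbits $\{(v(t), v'(t)) : t \in \R\}$ of distinct bounded solutions pairwise disjoint simple plane curves, and one could try to show, by a Sturm-type comparison, that these curves are nested and that $\mathcal E$ increases strictly along the vertical ray $\{v = v(0),\ v' \ge 0\}$.
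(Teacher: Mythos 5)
Your groundwork is sound in several places: the algebraic decomposition $\mathcal E_v - \mathcal E_w = \mathcal I + \mathcal E_z^{\mathrm{lin}} + \mathcal N$ does check out, and $\mathcal N \geq 0$ by convexity; the kernel $G$ of $\bigl(\partial^4 - A\partial^2 + B\bigr)^{-1}$ is indeed strictly positive exactly because $4B < A^2$ produces two distinct positive real characteristic roots; the representation $z = G*(\tilde q\, z)$ is valid for bounded $z$, and evaluating it at $t=0$ does force $z$ to change sign; and your remark that the one-sided hypothesis $w'(0) \geq 0$ cannot be dropped (take $w = v(-\cdot)$, which has the same energy) is a useful sanity check. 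But the argument then stops. You have not shown $\mathcal E_v - \mathcal E_w > 0$; you have identified the pieces that would need to be combined and then, as you yourself say, left the combining as ``the heart of the matter.'' Nothing in what you wrote controls the sign of $\mathcal I(t)$ or of $\mathcal E_z^{\mathrm{lin}}(t)$ at any $t$, nor do you exhibit a choice of $t$ (or an averaging in $t$) at which the right-hand side of your identity is demonstrably positive; the sign-change of $z$ is established but never used. Your alternative route — nested orbits in the $(v,v')$-plane via a Sturm-type comparison — is likewise only gestured at. As written, this is a promising outline, not a proof.

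For orientation: the paper does not supply a proof of Proposition \ref{proposition energy ordering} at all; it simply cites it as Theorem 2 of van den Berg \cite{vdb}, remarking only that the hypothesis $4B < A^2$ holds and that no a-priori bound on the nonlinearity is needed. The one result from \cite{vdb} the paper does reprove (in the appendix) is Proposition \ref{proposition (v,v')-uniqueness}, and that argument runs through the auxiliary functions $\phi = v'' - \lambda v$, $\psi = w'' - \lambda w$ and a second-order comparison for $\phi - \psi$ — structurally the same factorization you invoke through $G$. So your instincts about where $4B < A^2$ enters are correct; the missing ingredient is the actual energy-comparison mechanism, which would have to be reconstructed from \cite{vdb}, and that reconstruction is precisely the step your proposal leaves open.
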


For the proof we refer to \cite{vdb}. The assumption there is satisfied since $4B<A^2$. (Note that no a-priori bound on the solutions is necessary for our $f$.)

Next, we state two lemmas concerning the asymptotic behavior of solutions at infinity.\\

\begin{lemma}[Lemma 4 in \cite{vdb}] \label{lemma limits2}
Let $v \in C^4(\R)$ be a bounded solution of \eqref{eq ODE with f}. If $v$ is eventually monotone for $t\to\infty$, then 
$$
\lim_{t\to\infty} v(t) \in \{0, \pm B^{\frac{1}{p-1}} \}
\qquad\text{and}\qquad
\lim_{t\to\infty} v^{(k)}(t) =0
\quad\text{for}\ k=1,2,3 \,.
$$
Similarly, if $v$ is eventually monotone for $t\to -\infty$, then 
$$
\lim_{t\to-\infty} v(t)\in \{0, \pm B^{\frac{1}{p-1}} \}
\qquad\text{and}\qquad
\lim_{t\to -\infty} v^{(k)}(t) =0
\quad\text{for}\ k=1,2,3 \,.
$$
\end{lemma}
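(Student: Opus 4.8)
The plan is to show first that boundedness of $v$ already forces $v'$, $v''$, $v'''$ (hence also $v^{(4)}$) to be bounded, and then to extract the limits by a soft argument. By the symmetry $t \mapsto -t$ of \eqref{eq ODE with f} it suffices to treat $t \to +\infty$, and since $v$ is bounded and eventually monotone the limit $\ell := \lim_{t\to\infty} v(t)$ exists and is finite.

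To obtain boundedness of the derivatives I would rewrite \eqref{eq ODE with f} as $Lv = h$ with $L := \partial_t^4 - A\partial_t^2 + B$ and $h := |v|^{p-1}v$, which is bounded and continuous. Because $4B < A^2$, the characteristic polynomial $\xi^4 - A\xi^2 + B$ has four distinct real roots $\pm\mu, \pm\nu$ with $\mu, \nu > 0$; in particular $L$ has no imaginary characteristic root, so it admits a Green's function $G$ on $\R$ with $G, G', G'', G'''$ bounded and exponentially decaying (the third derivative having a jump at the origin), hence all in $L^1(\R)$. Then $\tilde v := G * h$ is a $C^4$ solution of $L\tilde v = h$ with $\tilde v, \tilde v', \tilde v'', \tilde v'''$ bounded, and $v - \tilde v$ is a \emph{bounded} solution of the homogeneous equation $Lw = 0$. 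Since every nontrivial solution of $Lw = 0$ is a nonzero linear combination of $e^{\pm\mu t}, e^{\pm\nu t}$ and therefore unbounded on $\R$, we get $v = \tilde v$; thus $v', v'', v'''$ are bounded, and then $v^{(4)} = Av'' + f(v)$ is bounded as well. In particular $v'$, $v''$, $v'''$ are uniformly continuous on $\R$.

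With this in hand I would finish by repeatedly invoking Barbalat's lemma (a uniformly continuous function on $[0,\infty)$ with convergent improper integral tends to $0$): $v'$ is uniformly continuous and $\int_0^T v' = v(T) - v(0) \to \ell - v(0)$, so $v'(t) \to 0$; then $\int_0^T v'' = v'(T) - v'(0) \to -v'(0)$ with $v''$ uniformly continuous gives $v''(t) \to 0$; and $\int_0^T v''' = v''(T) - v''(0) \to -v''(0)$ with $v'''$ uniformly continuous gives $v'''(t) \to 0$. Passing to the limit in \eqref{eq ODE with f} then yields $v^{(4)} = Av'' + f(v) \to f(\ell)$; if $f(\ell) \neq 0$, $v^{(4)}$ is eventually of one sign and bounded away from $0$, forcing $|v'''(t)| \to \infty$ and contradicting the boundedness of $v'''$. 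Hence $f(\ell) = 0$, i.e.\ $\ell \in \{0, \pm B^{1/(p-1)}\}$, as claimed. The case $t \to -\infty$ follows by applying this to $t \mapsto v(-t)$.

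The main obstacle is the boundedness of the derivatives. For a generic fourth-order equation this fails and bounded solutions may oscillate indefinitely; it is precisely the structural assumption $4B < A^2$ (hyperbolic linearization at $0$, no imaginary characteristic roots) that rules this out. If one prefers to avoid the fourth-order Green's function, the same conclusion follows by treating the equation as the second-order ODE $y'' - Ay = f(v)$ for $y := v''$: its unique bounded particular solution is the convolution of $f(v)$ with $\tfrac{1}{2\sqrt{A}}\,e^{-\sqrt{A}\,|\cdot|}$, and the homogeneous modes $e^{\pm\sqrt{A}\,t}$ cannot occur, since integrating $v'' = y$ twice would otherwise make $v$ unbounded — so $v''$, and then $v'$ and $v'''$, are bounded. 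Everything after that step is routine.
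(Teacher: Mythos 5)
Your proof is correct, but I should point out a structural fact first: the paper does not prove this lemma at all — it is quoted verbatim as Lemma 4 of \cite{vdb}, so there is no in-paper argument to compare against. What you have produced is therefore an independent, self-contained proof, and it is sound. Your strategy has two clean steps: (1) use the absence of purely imaginary characteristic roots of $L=\partial_t^4-A\partial_t^2+B$ to write $v = G*(|v|^{p-1}v)$ plus a bounded homogeneous solution, which must vanish, and thereby bound $v',v'',v'''$; (2) run Barbalat's lemma down the chain of derivatives (the telescoping integrals $\int_0^T v^{(k+1)} = v^{(k)}(T)-v^{(k)}(0)$ converge because the previous step gave $v^{(k)}(T)\to$ limit), giving $v^{(k)}\to 0$ for $k=1,2,3$; then passing to the limit in the ODE forces $f(\ell)=0$, i.e.\ $\ell\in\{0,\pm B^{1/(p-1)}\}$. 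All of this is correct, and the hypothesis of eventual monotonicity is used exactly where it must be, to guarantee that $\ell:=\lim_{t\to\infty}v(t)$ exists.

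One small observation of interest: of your two variants for step (1), the second-order reformulation $y''-Ay=f(v)$ with $y:=v''$ is the sharper one. It needs only $A>0$ (so that $\tfrac{1}{2\sqrt{A}}e^{-\sqrt{A}|\cdot|}\in L^1$ and the modes $e^{\pm\sqrt{A}t}$ are excluded by integrating twice), whereas the full fourth-order Green's function route also invokes $B>0$ and $4B<A^2$. Since the paper's standing assumptions \eqref{eq:abineq} guarantee all of this, both versions apply here, but the simpler one isolates more clearly what is actually being used. You could also streamline the final step: having shown $v^{(5)}=Av'''+f'(v)v'$ is bounded, Barbalat applies once more to give $v^{(4)}\to 0$ directly, which combined with $v^{(4)}\to f(\ell)$ yields $f(\ell)=0$ without the contradiction argument — though your version is equally valid.
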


The following lemma from \cite{wei} shows that equation \eqref{eq ODE with f} does not have a solution which tends to either plus or minus infinity at infinity, that is, solutions that blow up do so in finite time.\\

\begin{lemma}[Lemma 2.1 in \cite{wei}] \label{lemma limits}
Let $v \in C^4(\R)$ be a solution of \eqref{eq ODE with f}. If $a_+ := \lim_{t \to \infty} v(t)\in \R\cup\{\pm\infty\}$ exists, then $a_+ \in \R$. Similarly, if $a_- := \lim_{t \to -\infty} v(t)\in \R\cup\{\pm\infty\}$ exists, then $a_- \in \R$. 
\end{lemma}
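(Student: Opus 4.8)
The plan is to argue by contradiction: assuming $v(t)\to+\infty$, I will show that $v$ must then blow up in finite time, which is impossible for a function in $C^4(\R)$. By the symmetries recorded above, it suffices to treat this one case: if instead $a_+=-\infty$ one replaces $v$ by $-v$ (recall $f$ is odd), and the statements for $t\to-\infty$ follow upon replacing $v(t)$ by $v(-t)$, since \eqref{eq ODE with f} contains only even-order derivatives.

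So suppose $v(t)\to+\infty$. Since $p+1>2$ we have $F(w)=\frac{|w|^{p+1}}{p+1}-\frac B2 w^2\to+\infty$ as $w\to+\infty$, so, writing $E_0$ for the (constant) value of the energy $\mathcal E_v$,
\[
v'''(t)\,v'(t)=\tfrac12 v''(t)^2+\tfrac A2 v'(t)^2+F(v(t))-E_0\ \geq\ F(v(t))-E_0\ \longrightarrow\ +\infty .
\]
In particular $v'(t)\neq 0$ for all large $t$, so $v'$ has a constant sign on a half-line, and since $v\to+\infty$ that sign is positive; then $v'''(t)>0$ there as well, so $v''$ is increasing and therefore bounded below on some $[t_0,\infty)$. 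Feeding this into $v^{(4)}=Av''+f(v)$ and using $f(v(t))=v(t)^p-Bv(t)\to+\infty$, we obtain $v^{(4)}(t)\to+\infty$, and then successively $v'''(t),v''(t),v'(t)\to+\infty$. Hence there is $t_1\geq t_0$ with $v,v',v'',v'''>0$ on $[t_1,\infty)$, and, enlarging $t_1$ if needed so that $f(v)\geq\tfrac12 v^p$ there,
\[
v^{(4)}(t)\ \geq\ \tfrac12\,v(t)^p\qquad\text{for all }t\geq t_1 .
\]

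The crucial step is then a comparison with the explicit singular profile $w_T(t)=\alpha\,(T-t)^{-\gamma}$, where $\gamma=\frac{4}{p-1}$ and $\alpha>0$ is chosen so that $w_T^{(4)}=\tfrac12 w_T^p$ on $(-\infty,T)$; such $\alpha$ exists because $\gamma,\gamma+1,\gamma+2,\gamma+3$ are all positive. Note that $w_T,w_T',w_T'',w_T'''$ are positive on $(-\infty,T)$ and that, for each $k=0,1,2,3$, $w_T^{(k)}(t_1)\to 0$ as $T\to+\infty$. Since $v^{(k)}(t_1)>0$ for $k=0,1,2,3$, I can therefore fix $T>t_1$ so large that $v^{(k)}(t_1)>w_T^{(k)}(t_1)$ for all these $k$. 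Set $\phi=v-w_T$ on $[t_1,T)$. Wherever $\phi\geq 0$ one has $v\geq w_T\geq 0$, hence $\phi^{(4)}=v^{(4)}-w_T^{(4)}\geq\tfrac12(v^p-w_T^p)\geq 0$; starting from the four strict inequalities $\phi^{(k)}(t_1)>0$ and integrating $\phi^{(4)}\geq 0$ repeatedly, a continuity argument shows that $\phi,\phi',\phi'',\phi'''$ remain positive on all of $[t_1,T)$. Thus $v\geq w_T$ on $[t_1,T)$, and since $w_T(t)\to+\infty$ as $t\to T^-$, also $v(t)\to+\infty$ as $t\to T^-$, contradicting $v\in C^4(\R)$. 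This proves $a_+\in\R$.

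I expect the comparison step to be the main obstacle: it is the place where one must substitute for the missing phase-plane or maximum-principle machinery, and it works only because the positivity of all four of $v,v',v'',v'''$ at the single point $t_1$ propagates forward. Some care is also needed in the energy step, to rule out that $v'$ and $v'''$ oscillate rather than eventually having a fixed sign.
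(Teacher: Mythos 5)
Your proof is correct, and it is genuinely self-contained, whereas the paper itself gives no proof of this lemma — it merely cites Lemma~2.1 of \cite{wei} with the remark that the positivity hypothesis there can be dropped. The logic of each step checks out: the energy identity forces $v'''v'\to+\infty$, which rules out oscillation and pins down the eventual signs of $v'$ and $v'''$; then $v^{(4)}=Av''+f(v)$ bootstraps to give $v^{(k)}\to+\infty$ for $k\leq 3$ and the differential inequality $v^{(4)}\geq\tfrac12 v^p$ on $[t_1,\infty)$; and the comparison with the explicit singular solution $w_T(t)=\alpha(T-t)^{-4/(p-1)}$ of $w^{(4)}=\tfrac12 w^p$ propagates the four strict initial inequalities $v^{(k)}(t_1)>w_T^{(k)}(t_1)$ forward via the standard iterated-integration/continuity argument (since $\phi\geq0$ implies $\phi^{(4)}\geq0$ when $p>1$), forcing $v\geq w_T$ and hence a finite-time blowup, which contradicts $v\in C^4(\R)$. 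One small stylistic remark: the sentence beginning with ``so $v'''(t)>0$ there as well, so $v''$ is increasing and therefore bounded below'' invokes the boundedness below of $v''$ merely so that $Av''$ cannot cancel $f(v)\to+\infty$; since you immediately afterwards upgrade to $v''\to+\infty$, the intermediate phrasing is harmless but slightly roundabout. Overall this is a clean, elementary finite-time-blowup comparison argument, and it makes transparent exactly where positivity of $v$ is not needed as a hypothesis — it is derived, for large $t$, from $v\to+\infty$ — which is the point the paper flags when adapting the cited lemma.
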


This lemma is proved in \cite{wei} for \emph{positive} solutions. An inspection of the proof shows, however, that this positivity is not needed.

After having recalled these results, we now turn to the proof of Proposition \ref{proposition symmetry and classification}.\\

\begin{lemma} \label{lemma symmetry}
Let $v \in C^4(\R)$ be a bounded solution of \eqref{eq ODE with f}.
\begin{enumerate}
\item[(i)] Suppose that $v'(t_0) = 0$ for some $t_0 \in \R$. Then $v$ is symmetric with respect to $t_0$, i.e., for all $t \in \R$, $v(t_0+t) = v(t_0-t)$. 
\item[(ii)] Suppose that $v(t_0) = 0$ for some $t_0 \in \R$. Then $v$ is antisymmetric with respect to $t_0$, i.e., for all $t\in\R$, $v(t_0 - t) = -v(t_0 + t)$.
\end{enumerate}
\end{lemma}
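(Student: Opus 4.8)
The plan is to exploit the two ``second-order-like'' tools available for bounded solutions, namely the uniqueness statement of Proposition~\ref{proposition (v,v')-uniqueness} and the invariance of \eqref{eq ODE with f} under the reflection $t\mapsto -t$ and (for part (ii)) the sign flip $v\mapsto -v$. The key observation is that if $v$ solves \eqref{eq ODE with f} and is bounded, then any reflected or reflected-and-negated copy of $v$ is again a bounded solution, so Proposition~\ref{proposition (v,v')-uniqueness} forces it to coincide with $v$ as soon as the two copies share the same value and first derivative at one point.

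For part (i), I would set $w(t) := v(2t_0 - t)$. Since \eqref{eq ODE with f} contains only even-order derivatives, $w$ is again a solution, and it is bounded because $v$ is. Now $w(t_0) = v(t_0)$ and $w'(t_0) = -v'(t_0) = 0 = v'(t_0)$ by hypothesis. After translating so that the common point is the origin (or simply noting that Proposition~\ref{proposition (v,v')-uniqueness} is translation-invariant, which follows from autonomy), Proposition~\ref{proposition (v,v')-uniqueness} yields $w \equiv v$, i.e. $v(2t_0 - t) = v(t)$ for all $t$, which is exactly the claimed symmetry $v(t_0 + s) = v(t_0 - s)$.

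For part (ii), I would instead set $w(t) := -v(2t_0 - t)$. This is a bounded solution because \eqref{eq ODE with f} is invariant under both $t \mapsto -t$ and $v \mapsto -v$ (as $f$ is odd). At $t = t_0$ we have $w(t_0) = -v(t_0) = 0 = v(t_0)$ by hypothesis, and $w'(t_0) = v'(2t_0 - t)|_{t=t_0} = v'(t_0)$ — here the minus sign from differentiating $-v(2t_0-t)$ cancels the minus sign from the inner derivative. Hence $v$ and $w$ agree to first order at $t_0$, and Proposition~\ref{proposition (v,v')-uniqueness} gives $w \equiv v$, i.e. $-v(2t_0 - t) = v(t)$, which is the asserted antisymmetry $v(t_0 - s) = -v(t_0 + s)$.

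There is essentially no hard analytic obstacle here: the entire content is packaged into Proposition~\ref{proposition (v,v')-uniqueness}, and the only things to be careful about are (a) checking that the reflected/negated function is genuinely a solution of the same equation — which is where $4B < A^2$ has already done its work, via van den Berg's theorem, in establishing the uniqueness proposition — and (b) bookkeeping the signs of the first derivatives at $t_0$ so that the hypotheses of Proposition~\ref{proposition (v,v')-uniqueness} are met. One might also remark that boundedness of $v$ is essential, since Proposition~\ref{proposition (v,v')-uniqueness} can fail without it; this is the only place the global hypothesis enters, and it is supplied by Lemma~\ref{bounded} when the lemma is later applied to global solutions.
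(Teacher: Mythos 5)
Your proof is correct and is essentially identical to the paper's: both compare $v$ with its reflected (resp.\ reflected-and-negated) copy and invoke Proposition~\ref{proposition (v,v')-uniqueness}; the paper merely normalizes $t_0=0$ first, while you keep $t_0$ general and note that the uniqueness statement is translation-invariant by autonomy. The sign bookkeeping in both parts is correct.
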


\begin{proof}
$(i)$ Since equation \eqref{eq ODE with f} is autonomous, we may assume $t_0 = 0$. Moreover, if $v$ is a solution, then so is $w(t) := v(-t)$. Thus $v(0) = w(0)$ and, by assumption, $v'(0) = w'(0) = 0$. Proposition \ref{proposition (v,v')-uniqueness} gives $v \equiv w$.

$(ii)$ Again, we may assume $t_0 = 0$. Moreover, if $v$ solves \eqref{eq ODE with f}, then so does $w(t):= -v(-t)$. Since $v(0) = w(0)$ and $v'(0) = w'(0)$, we conclude by Proposition \ref{proposition (v,v')-uniqueness} that $v \equiv w$. 
\end{proof}

Our second lemma shows uniqueness, up to translations, of the positive homoclinic solution. A similar result for $p=2$ appears in \cite{AmTo} with a different proof.\\

\begin{lemma} \label{lemma unique homoclinic}
Let $v,w\in C^4(\R)$ be positive solutions of \eqref{eq ODE with f} with $\lim_{|t| \to \infty} v = \lim_{|t| \to \infty} w = 0$ and $v'(0) = w'(0) = 0$. Then $v \equiv w$.
\end{lemma}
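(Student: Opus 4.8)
The plan is to use the symmetry of $v$ and $w$ about the origin (immediate from Lemma~\ref{lemma symmetry}(i) since $v'(0)=w'(0)=0$), to upgrade this to strict monotonicity on the two half-lines, to compute that the energy of any such homoclinic solution must equal $F(0)=0$, and finally to reach a contradiction via the energy ordering of Proposition~\ref{proposition energy ordering}.

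\emph{Geometry of the solutions.} First I would note that $v$ is bounded (being positive and vanishing at $\pm\infty$) and attains its positive maximum at some $t^\ast$, where $v'(t^\ast)=0$. By Lemma~\ref{lemma symmetry}(i), $v$ is then symmetric both about $t^\ast$ and about $0$; composing these two reflections shows that $v$ is periodic with period $2|t^\ast|$, which forces $t^\ast=0$, since a nonzero periodic function cannot vanish at infinity. The same argument rules out any critical point of $v$ in $(0,\infty)$, so $v$ is strictly decreasing on $[0,\infty)$ and, by symmetry about $0$, strictly increasing on $(-\infty,0]$; in particular the maximum is attained only at $0$. All of this applies verbatim to $w$.

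\emph{The energy vanishes, and the conclusion.} Since $v$ is a bounded solution that is eventually monotone as $t\to\infty$, Lemma~\ref{lemma limits2} applies and, using $v\to 0$, gives $v^{(k)}(t)\to 0$ as $t\to\infty$ for $k=1,2,3$. Passing to the limit $t\to\infty$ in the conserved quantity $\mathcal E_v$ yields $\mathcal E_v=F(0)=0$, and likewise $\mathcal E_w=0$. Now, since $v'(0)=w'(0)=0$, Proposition~\ref{proposition (v,v')-uniqueness} gives $v\equiv w$ as soon as $v(0)=w(0)$, so I would argue by contradiction, assuming without loss of generality $v(0)>w(0)$ (both being the respective maxima). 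Because $v$ decreases strictly from $v(0)>w(0)$ down to $0$ on $[0,\infty)$, there is $t_0>0$ with $v(t_0)=w(0)$ and $v'(t_0)<0$. The translate $\tilde v:=v(\cdot+t_0)$ is then a bounded solution of \eqref{eq ODE with f} with $\tilde v(0)=w(0)$ and $\tilde v'(0)<0=w'(0)$, so Proposition~\ref{proposition energy ordering} produces the strict inequality $\mathcal E_{\tilde v}>\mathcal E_w$; but $\mathcal E_{\tilde v}=\mathcal E_v=0=\mathcal E_w$ by translation invariance of the energy and the computation above, a contradiction. Hence $v(0)=w(0)$ and $v\equiv w$.

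\emph{Main obstacle.} The delicate point is the geometric step: one must extract enough rigidity from the single symmetry furnished by Lemma~\ref{lemma symmetry} to pin the maximum at the origin and obtain strict monotonicity on each half-line. This monotonicity is exactly what makes Lemma~\ref{lemma limits2} applicable (hence $\mathcal E_v=\mathcal E_w=0$) and what produces the point $t_0$ at which $v$ takes the value $w(0)$ with a strictly negative derivative — precisely the configuration the energy-ordering proposition requires. Once this is in place, the remainder is bookkeeping with the quoted results.
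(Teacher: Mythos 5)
Your proof is correct, and it diverges from the paper's in an interesting way at the final step. The preliminary geometry (pinning the maximum of each solution at the origin and establishing strict monotonicity on each half-line by iterating Lemma~\ref{lemma symmetry}) and the computation $\mathcal E_v=\mathcal E_w=F(0)=0$ via Lemma~\ref{lemma limits2} are essentially the same as in the paper, just phrased slightly differently. Where you depart is the contradiction under the assumption $v(0)>w(0)$. The paper first argues that $v>w$ everywhere (itself via Propositions~\ref{proposition (v,v')-uniqueness} and~\ref{proposition energy ordering}), and then closes the argument with an integration-by-parts (Pohozaev-type) identity, showing $\int_{-R}^{R} wv(w^{p-1}-v^{p-1})\to 0$ while it is strictly negative and decreasing. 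You instead exploit the strict monotonicity to locate a single $t_0>0$ with $v(t_0)=w(0)$ and $v'(t_0)<0$, and then apply Proposition~\ref{proposition energy ordering} directly to the translate $\tilde v=v(\cdot+t_0)$ and $w$. Since $\tilde v(0)=w(0)$, $\tilde v'(0)<w'(0)=0$, and energies are translation-invariant and both zero, this yields the contradiction in one step. Your route is shorter, avoids establishing the global ordering $v>w$, and bypasses the integration-by-parts identity entirely; the paper's route, while longer, is in the spirit of comparison/Pohozaev arguments that are reusable in settings without strict monotonicity. Both are valid.
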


\begin{proof}
Let us first prove that 0 is the only zero of $v'$ and $w'$. Indeed, if $v'$ had another zero at, say, $t_0 > 0$, then by repeated application of Lemma \ref{lemma symmetry} (note that by assumption, $v$ is bounded) we deduce that $v$ must be periodic of period $2 t_0$. In particular $0 < v(0) = v(2 k t_0)$ for all $k \in \N$, which contradicts the assumption that $v(t) \to 0$ as $t \to \infty$. The argument for $w$ is analogous. Hence we must have 
\begin{equation} 
\label{eq homoclinic decreasing} 
v'(t) <0 \qquad\text{and}\qquad w'(t) < 0 \qquad \text{for all}\ t > 0 \,. \end{equation} 

Next, by Lemma \ref{lemma limits2} and by energy conservation,
\begin{equation} \label{eq lemma unique homoclinic}
\mathcal E_v = \lim_{t\to\infty} \mathcal E_v(t) = F(0) = 0
\qquad\text{and}\qquad
\mathcal E_w = \lim_{t\to\infty} \mathcal E_w(t) = F(0) = 0 \,.
\end{equation}

If $v(0) = w(0)$, we are done by Proposition \ref{proposition (v,v')-uniqueness}. 

To complete the proof, let us suppose for contradiction that $v(0) > w(0)$. We claim that this implies that $v > w$ everywhere. Indeed, otherwise there is $t_0 > 0$ such that $v>w$ on $[0,t_0)$ and $v(t_0) = w(t_0)$. Then by \eqref{eq homoclinic decreasing} we infer that $v'(t_0) \leq w'(t_0) < 0$. If $v'(t_0)=w'(t_0)$, then Proposition \ref{proposition (v,v')-uniqueness} implies $v\equiv w$, contradicting $v(0)>w(0)$. If $v'(t_0) < w'(t_0) < 0$, then Proposition \ref{proposition energy ordering} implies $\mathcal E_v > \mathcal E_w$, which contradicts \eqref{eq lemma unique homoclinic}. Hence $v > w$ everywhere. 

We can now derive the desired contradiction. For every $R >0$, we have, using integration by parts and the fact that $v$ and $w$ satisfy \eqref{eq ODE with f},
\begin{align*}
0 & = \int_{-R}^R w(v^{(4)} - A v'' - f(v)) \\
&= b(R) + \int_{-R}^R v(w^{(4)} - A w'' - f(w)) + \int_{-R}^R wv(w^{p-1} - v^{p-1}) \\
&= b(R) + \int_{-R}^R wv(w^{p-1} - v^{p-1}) \,.
\end{align*}
Here, $b(R)$ contains all the boundary terms coming from the integrations by part. By Lemma \ref{lemma limits2} we have $b(R) \to 0$ as $R \to \infty$. But since $\int_{-R}^R wv(w^{p-1} - v^{p-1})$ is a negative and strictly decreasing function of $R$, we obtain a contradiction by choosing $R$ large enough.
\end{proof}

For the concrete values of $A$, $B$ and $p$ in \eqref{eq:defabp} one can compute the homoclinic solution explicitly. We emphasize that this is the only place in the proof of Proposition \ref{proposition symmetry and classification} where the precise form of $A$, $B$ and $p$ enters.\\

\begin{corollary}\label{corollary v is cosh}
Suppose that $v$ is a positive solution of \eqref{eq ODE} with $\lim_{|t| \to \infty} v(t) = 0$. Then there is $T \in \R$ such that
\[ v(t) = c_n (2 \cosh(t-T))^{-\frac{n-4}{2}}, \qquad t \in \R \,, \]
with $c_n$ from \eqref{eq:linsol}.
\end{corollary}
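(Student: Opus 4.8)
The plan is to reduce the corollary to Lemma \ref{lemma unique homoclinic} by exhibiting \emph{one} explicit positive homoclinic solution with the right normalization and then invoking uniqueness. First I would guess the ansatz $v(t) = c (2\cosh(t-T))^{-\frac{n-4}{2}}$, which is the natural one: it is the Emden--Fowler transform of the Lin bubble \eqref{eq:linsol} centered at a point on the sphere $|x_0|=1$ (equivalently, it arises from $u(x)=c_n(\lambda/(1+\lambda^2|x-x_0|^2))^{(n-4)/2}$ via $u(x)=|x|^{-\frac{n-4}{2}}v(\ln|x|)$ in the radial-after-Kelvin picture), so we already know from Lin's work that such a $u$ solves \eqref{equation on Rn} away from the singularity, hence the corresponding $v$ solves \eqref{eq ODE}. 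Alternatively one can verify the ODE directly: writing $\phi(t)=(2\cosh t)^{-1}$, one computes $\phi'=-\phi\tanh t$, and using $\tanh^2 t = 1-4\phi^2$ one expresses $v'',v^{(4)}$ for $v=c\,\phi^{\frac{n-4}{2}}$ as polynomials in $\phi^{\frac{n-4}{2}}$ and $\phi^{\frac{n-4}{2}}\cdot\phi^2 \sim \phi^{\frac{n+4}{2}}$; matching the coefficient of the lower-order term $\phi^{\frac{n-4}{2}}$ gives $B=\frac{n^2(n-4)^2}{16}$ (consistent with the linearization exponents $\pm\frac{n-4}{2}$) and matching the coefficient of $\phi^{\frac{n+4}{2}}$ pins down $c=c_n$ exactly as in \eqref{eq:linsol}; the cross term's coefficient then forces $A=\frac{n(n-4)+8}{2}$, which indeed matches \eqref{eq:defabp}. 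This is the routine computation I would not grind through in detail but would present as ``a direct computation shows.''

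Next I would record that this explicit $v$ is positive, satisfies $v(t)\to 0$ as $|t|\to\infty$ (since $\cosh$ grows), and is even about $t=T$, so $v'(T)=0$. Thus it meets all the hypotheses of Lemma \ref{lemma unique homoclinic}. Now given an arbitrary positive solution $v$ of \eqref{eq ODE} with $\lim_{|t|\to\infty}v(t)=0$, I need to produce the translation parameter $T$ and check $v'(0+T)=0$ — more precisely, I must show $v$ has a critical point at all, so that after translating I can apply the lemma. Here I would argue: since $v>0$, $v\to 0$ at both ends, and $v$ is continuous, $v$ attains a positive maximum at some point $t_0\in\R$, where necessarily $v'(t_0)=0$. (One also needs $v$ bounded to invoke the lemma, but $\sup v<\infty$ follows from $v\to 0$ at $\pm\infty$ together with continuity.) Then $v(\cdot)$ and the explicit solution translated to have its critical point at $t_0$ are two positive homoclinic solutions both with vanishing derivative at $t_0$; Lemma \ref{lemma unique homoclinic} (applied after translating both so the critical point sits at $0$) gives that they coincide, i.e. $v(t)=c_n(2\cosh(t-t_0))^{-\frac{n-4}{2}}$, and we take $T=t_0$.

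The main obstacle is entirely in the explicit verification step: getting the constant $c_n$ and the coefficient matching right requires care with the algebra of hyperbolic identities and binomial-type bookkeeping of the derivatives of $\phi^{\frac{n-4}{2}}$, and one must confirm that \emph{all three} coefficient conditions are simultaneously satisfied by precisely the $A,B,c$ arising from $n$ (this is where the specific structure of \eqref{equation on Rn} — as opposed to a general $f$ of the form \eqref{eq definition of f} — is used, as the paragraph preceding the corollary emphasizes). Everything after that — positivity, decay, existence of a critical point via the maximum, boundedness — is soft and immediate. A cleaner route that sidesteps the ODE computation: simply cite Lin's classification \eqref{eq:linsol} of solutions with removable singularity, specialize to $|x_0|=1$, and transform; but since the corollary is stated in ODE language I would at minimum note that the transformed function solves \eqref{eq ODE}, which is the content of the displayed equivalence in the introduction. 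I would present the self-contained ODE verification as the primary argument, flagging it as the one computation where the exact form of $A$, $B$, $p$ is needed, exactly as the surrounding text advertises.
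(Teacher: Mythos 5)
Your proposal is correct and follows essentially the same route as the paper: verify by direct computation that $c_n(2\cosh t)^{-\frac{n-4}{2}}$ solves \eqref{eq ODE}, observe that any positive solution decaying at $\pm\infty$ attains a positive maximum and hence has a critical point, and then invoke Lemma \ref{lemma unique homoclinic} for uniqueness. The paper condenses the verification to ``a straightforward calculation,'' which you spell out slightly, and your remark that this is the one spot where the exact values of $A$, $B$, $p$ matter mirrors the paper's own comment.
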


\begin{proof}
A straightforward calculation shows that $w(t) = c_n (2 \cosh(t))^{-\frac{n-4}{2}}$ solves \eqref{eq ODE}. From the assumptions on $v$ it follows that $v$ has a global maximum at some $T \in \R$. Since $v'(T) = 0$, we can apply Lemma \ref{lemma unique homoclinic} to deduce that $v(\cdot + T) = w$.
\end{proof}

The following lemma is one of the key new results in this paper.\\

\begin{lemma}\label{bounded}
Let $v\in C^4(\R)$ be a solution of \eqref{eq ODE with f}. Then $v$ is bounded.
\end{lemma}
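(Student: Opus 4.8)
The plan is to argue by contradiction: suppose $v$ is a solution of \eqref{eq ODE with f} that is unbounded, and derive a contradiction using the energy $\mathcal E_v$ together with Lemma \ref{lemma limits} and the structural inequalities \eqref{eq:abineq}. The first observation is that since $v$ is a global solution on all of $\R$ (we are given $v\in C^4(\R)$), it does not blow up in finite time, so any unboundedness must occur as $t\to+\infty$ or $t\to-\infty$; by the symmetry $t\mapsto -t$ we may assume $\limsup_{t\to\infty}|v(t)|=\infty$. Next I would rule out the monotone scenario: if $v$ is eventually monotone as $t\to\infty$, then $\lim_{t\to\infty}v(t)$ exists in $\R\cup\{\pm\infty\}$, and Lemma \ref{lemma limits} forces this limit to be a finite real number, contradicting unboundedness. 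Hence $v$ must oscillate, i.e.\ $v'$ has arbitrarily large zeros, and in fact $v$ must have local maxima $M_k\to\infty$ (or local minima tending to $-\infty$; again reduce to one case by oddness of $f$).

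The heart of the argument is to exploit energy conservation at these extremal points. Since $\mathcal E_v(t)\equiv\mathcal E_v$ is constant, evaluating at a critical point $t_k$ of $v$ where $v'(t_k)=0$ gives
\[
\mathcal E_v = -v'''(t_k)v'(t_k) + \tfrac12 v''(t_k)^2 + \tfrac A2 v'(t_k)^2 + F(v(t_k)) = \tfrac12 v''(t_k)^2 + F(v(t_k)) \,.
\]
Now $F(v)=\frac{|v|^{p+1}}{p+1}-\frac B2 v^2\to+\infty$ as $|v|\to\infty$ since $p>1$. So if $v(t_k)\to\infty$ along a sequence of critical points, then $F(v(t_k))\to+\infty$, and since $\tfrac12 v''(t_k)^2\ge 0$ this forces $\mathcal E_v=+\infty$, which is absurd because $\mathcal E_v$ is a finite constant. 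Thus I would aim to show that an unbounded oscillating solution must have infinitely many critical points at which $|v|$ is large — this is where the main work lies. The cleanest route: between consecutive critical points of an oscillating function the value of $v$ changes monotonically, and the $\limsup|v|=\infty$ hypothesis combined with the oscillation (infinitely many sign changes of $v'$) should produce critical points $t_k$ with $|v(t_k)|\to\infty$, because the suprema over successive "humps" are attained at interior critical points once $v$ fails to be eventually monotone.

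The step I expect to be the main obstacle is the last one: carefully converting "$\limsup_{t\to\infty}|v(t)|=\infty$ and $v$ not eventually monotone" into "$|v|$ takes arbitrarily large values \emph{at critical points of $v$}". A priori, $v$ could conceivably have its large values on stretches where $v'\ne 0$ while all its critical values stay bounded, if the oscillations were somehow not genuine oscillations. To handle this I would distinguish cases according to whether $v'$ has arbitrarily large zeros: if it does, then on the interval between the last zero of $v'$ before a near-maximal point and the next zero, $v$ is monotone, and the near-maximal value is controlled by the critical value at one of the endpoints, giving critical values $\to\infty$; if $v'$ has a last zero $t_*$, then $v$ is monotone on $[t_*,\infty)$, i.e.\ eventually monotone, and we are back in the Lemma \ref{lemma limits} case, contradiction. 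A subtlety is that $v'$ could vanish at a limit point without being eventually monotone, but then $v$ would have to accumulate at a critical value, which again must be finite by the energy identity above once one checks $v''$ stays bounded there — and boundedness of $v''$ near such an accumulation follows because $F(v)$ and $\mathcal E_v$ are bounded there. Assembling these cases yields the desired contradiction and hence that every global solution is bounded.
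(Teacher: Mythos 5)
Your proposal is correct and takes essentially the same approach as the paper: handle the eventually monotone case via Lemma \ref{lemma limits}, and in the oscillatory case use energy conservation at critical points where $|v|$ is large, since $\tfrac12 v''^2+F(v)$ would exceed the constant $\mathcal E_v$. The paper streamlines your "near-maximal point" case analysis by instead fixing $R$ with $F>\mathcal E_v$ on $\{|u|\ge R\}$, taking the first time $t^*$ with $|v(t^*)|=R$ (so automatically $v'(t^*)\ge 0$ after normalizing the sign), and then the first zero $T$ of $v'$ after $t^*$ gives $v(T)\ge R$; this avoids the sign cases for $v'(\tau_k)$ and also dissolves the "subtlety" you raise at the end, since the only dichotomy that matters is whether the set of zeros of $v'$ in $[0,\infty)$ is bounded or unbounded, and accumulation of zeros at a finite point plays no role.
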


\begin{proof}
By replacing $v(t)$ by $v(-t)$, we only need to show that $v$ is bounded on $[0,\infty)$. We consider the set $Z_+ = \{ t \geq 0\, : \, v'(t) = 0 \}$.

If $Z_+$ is bounded (in particular, if it is empty), then $v$ is monotone for large $t$ and thus admits a limit $a_+$ as $t \to \infty$. By Lemma \ref{lemma limits}, $a_+$ is finite and therefore $v$ is bounded on $[0,\infty)$.

We now assume that $Z_+$ is unbounded. Since $F(u) \to \infty$ as $|u| \to \infty$, there is an $R > |v(0)|$ such that $F(u) > \mathcal E_v$ for all $|u| \geq R$. We claim that $|v| < R$ on $[0, \infty)$ which, in particular, implies that $v$ is bounded on $[0,\infty)$. Indeed, by contradiction assume that $M_R:= \{t \geq 0: |v(t)| \geq R  \}$ is non-empty and define $t^* := \inf M_R$. Since $|v(0)| < R$, we must have $t^* > 0$ and $|v(t^*)|=R$. Replacing $v(t)$ by $-v(t)$ if necessary (which does not change the set $Z_+$), we may assume that $v(t^*)=R$. Then also $v'(t^*) \geq 0$. Since $Z_+$ is unbounded, the set $Z_+\cap [t^*,\infty)$ is non-empty and we can set $T := \inf \left( Z_+ \cap [t^*, \infty)\right)$. Then $v'(T)=0$ and $v' \geq 0$ on $[t^*, T]$ by continuity of $v'$. Thus $v(T) \geq v(t^*) = R$, and we deduce that 
\[ \mathcal E_v(T)= \frac{1}{2} v''(T)^2 + F(v(T)) \geq F(v(T)) > \mathcal E_v \,, \]
a contradiction to energy conservation. This completes the proof of Lemma \ref{bounded}.
\end{proof}

We are now ready to prove the main result of this section.

\begin{proof}[Proof of Proposition \ref{proposition symmetry and classification}] Let $v \in C^4(\R)$ be a solution to \eqref{eq ODE with f}, and set 
\[ Z:= \{ t \in \R \,: \, v'(t) = 0\} \,. \]
We distinguish several cases:

Suppose first that $Z = \emptyset$, so $v$ is strictly monotone. We will show that this case cannot occur. Up to replacing $v(t)$ by $v(-t)$, we may assume that $v$ is strictly increasing, and so both limits $a_\pm = \lim_{t \to \pm \infty} v(t)$ exist in $\R\cup\{\pm\infty\}$. By Lemmas \ref{lemma limits} and \ref{lemma limits2}, we are reduced to studying three cases, each of which will lead to a contradiction via an energy argument.

If $a_-= 0$ and $a_+ = B^{\frac{1}{p-1}}$, then using Lemma \ref{lemma limits2} we get $\lim_{t\to-\infty} \mathcal E_v(t) = F(0) = 0$, while $\lim_{t\to+\infty} \mathcal E_v(t) = F(B^{\frac{1}{p-1}}) < 0$, a contradiction to energy conservation. Analogously, a contradiction is obtained if $a_- = -B^{\frac{1}{p-1}}$ and $a_+ = 0$.

It remains to consider the case $a_- = -B^{\frac{1}{p-1}}$, $a_+ = B^{\frac{1}{p-1}}$. Then as above, by Lemma \ref{lemma limits2}, 
\begin{equation} \label{energy less than zero}
\lim_{|t|\to\infty} \mathcal E_v(t) = F(B^{\frac{1}{p-1}})<0 \,.
\end{equation} 
On the other hand, by \cite[Corollary 6]{vdb}, the inequality 
\begin{equation} \label{energy inequality}
\mathcal{E}_v(t) \geq \frac{1}{2}v''(t)^2 + F(v(t))
\end{equation}
holds for all $t \in \R$. But now evaluating the energy at $t_0$ such that $v(t_0) =0$ gives, together with \eqref{energy inequality}, that $\mathcal E_v(t_0) \geq  \frac{1}{2}v''(t_0)^2 + F(0) \geq 0$, in contradiction to \eqref{energy less than zero} and energy conservation. Altogether, we have shown that the case $Z = \emptyset$ cannot occur.

If $|Z| = 1$, we may assume, up to a translation, that $Z = \{0\}$. Then $v$ is strictly monotone on $(-\infty,0)$ and $(0,\infty)$, and so both limits $a_\pm = \lim_{t \to \pm \infty} v(t)$ exist in $\R\cup\{\pm\infty\}$. By Lemmas \ref{lemma limits2} these limits are finite, so $v$ is bounded and, by Lemma \ref{lemma symmetry}, even. Therefore $a_+=a_-$. By Lemma \ref{lemma limits}, only three cases can occur: $a_+ = a_- = 0$ or $a_+ = a_- = \pm B^{\frac{1}{p-1}}$. In the first case, monotonicity implies that either $v >0$ or $v<0$, and we conclude that $v(t) = \pm c_n (2 \cosh(t))^{-\frac{n-4}{2}}$ by Lemma \ref{lemma unique homoclinic}. 

As for the other cases, let us without loss assume that $a_+ = a_- = B^{\frac{1}{p-1}}$ (otherwise replace $v$ by $-v$). We derive a contradiction as follows. Since $v$ is strictly monotone on $[0, \infty)$, $v(0) \neq B^{\frac{1}{p-1}}$, and from $\mathcal E_v(0) = \frac{1}{2}v''^2(0) + F(v(0)) \geq F(B^{\frac{1}{p-1}})$ we infer that $v(0) = -B^{\frac{1}{p-1}}$ (since $F$ takes on its global minimal value only in $ \pm B^{\frac{1}{p-1}}$). Hence $v$ changes sign, i.e. there is $t_0 \in \R$ such that $v(t_0)=0$. By Lemma \ref{lemma symmetry}, $v$ is antisymmetric with respect to $t_0$. But this is a contradiction to the fact that both $a_+$ and $a_-$ are positive. Altogether we have thus shown that if $|Z|=1$, then $v(t) = \pm c_n (2 \cosh(t))^{-\frac{n-4}{2}}$.

Finally, let us consider the case where $|Z| \geq 2$. By continuity of $v'$, we see that unless $v$ is constant (and hence $v \equiv \pm B^{\frac{1}{p-1}}$ or $v\equiv 0$), the set $Z$ cannot be dense, i.e., there are real numbers $c < d$ such that $v'(c) = v'(d) = 0$ and $v' \neq 0$ on $(c,d)$. By Lemma \ref{bounded} $v$ is bounded and therefore we can use Lemma \ref{lemma symmetry} as in the first part of the proof of Lemma \ref{lemma unique homoclinic} to conclude that $v$ must be periodic of period $2(d-c)$. Moreover, since $v$ is strictly monotone on $(c,d)$, there is only one maximum and minimum per period interval, and these are strict. The symmetry with respect to the extrema follows at once from Lemma \ref{lemma symmetry}. This completes the proof of Proposition \ref{proposition symmetry and classification}.
\end{proof}

%%%%%%%%%%%%%%%%%

\section{Proof of the main results}

\subsection{Proof of Theorem \ref{theorem existence}}

We begin with the proof of part $(i)$ of Theorem \ref{theorem existence}. Let $v \in C^4(\R)$ be a solution of \eqref{eq ODE with f}. By Proposition \ref{proposition symmetry and classification}, the only case where 
\begin{equation} \label{inf leq B} \inf_\R |v| \leq B^{\frac{1}{p-1}} \end{equation}
may fail to hold is when $v$ is periodic. In this case, $v$ possesses a local minimum at, say, $t_0 \in \R$. Note that if $v$ has a zero  then \eqref{inf leq B} is automatically fulfilled, so we may assume that $v$ has a fixed sign and, up to replacing $v$ by $-v$, we may assume that $v >0$. But by \cite[Lemma 2.6]{wei}, either $v$ is constant (and hence $v \equiv B^{\frac{1}{p-1}}$) or $v(t_0) < B^{\frac{1}{p-1}}$, so that \eqref{inf leq B} holds with strict inequality.

We turn now to proving part $(ii)$ of Theorem \ref{theorem existence}. We proceed via a shooting argument. The value $a \in (0, B^{\frac{1}{p-1}})$ will be considered to be fixed throughout the following argument. 

For $\beta \geq 0$, we denote by $v_\beta$ the unique solution of \eqref{eq ODE with f} with the initial values
\begin{align} \label{eq IVP}
v(0) = a, \quad v'(0) = 0, \quad v''(0) = \beta, \quad v'''(0) = 0 \,,
\end{align}
and by $T_\beta\in(0,\infty]$ its maximal forward time of existence. Also, let $b := - \min_{v \in \R_+} f(v)$. 

Suppose that $\beta> \frac Ab =:\beta_0$. Then we see from 
\begin{equation} \label{eq IVP RHS} v_\beta^{(4)} = A v_\beta'' +  f(v_\beta) \end{equation}
and \eqref{eq IVP} that $v_\beta^{(4)} >0$ initially. Thus, $v_\beta''$ increases initially, and since the right hand side of equation \eqref{eq IVP RHS} is positive initially, it is easy to see that it will stay positive on $[0,T_\beta)$. Thus, $v_\beta^{(4)} >0$ on $[0,T_\beta)$, which implies that $v_\beta$ and its first three derivatives all keep increasing on $[0,T_\beta)$. Thus, if $T_\beta=\infty$, then $v_\beta$ is unbounded. On the other hand, if $T_\beta<\infty$, then $v_\beta(t)\to \infty$ as $t\to T_\beta$ (since $f$ is locally Lipschitz). To summarize, $v_\beta$ increases monotonically on $[0,T_\beta)$ and diverges to $+\infty$ as $t \to T_\beta$ for $\beta\geq\beta_0$. 

So we can restrict our search to $\beta \in [0, \beta_0]$.  However, for all $\beta \leq \beta_0$, we have the uniform energy bound
\[ \mathcal E_{v_\beta}(0) = \frac{\beta^2}{2} + F(a) \leq \frac{\beta_0^2}{2} + F(a). \]
Since $F(v) \to \infty$ as $v\to\infty$, there is an $R>0$ such that $F(v) > \frac{\beta_0^2}{2} + F(a)$ for all $v > R$. This implies that whenever $\beta \leq \beta_0$ and $v_\beta(t_0) > R$, we must have $v_\beta'(t_0) \neq  0$, for otherwise 
\[ \mathcal E_{v_\beta}(t_0) = \frac{v_\beta''(t_0)^2}{2} + F(v_\beta(t_0)) \geq F(v_\beta(t_0)) > \frac{\beta_0^2}{2} + F(a) \,,\]  
which contradicts the upper bound on $\mathcal E_{v_\beta}(0)$ and energy conservation. In particular, $v_\beta$ which enters the interval $(R, \infty)$ cannot leave it again, and hence is certainly not the periodic solution we are looking for.

On the other hand, if $\beta = 0$, we see from \eqref{eq IVP RHS} that $v_0^{(4)}(0) =  f(a) < 0$, and hence $v_0(t)$ and $v_0''(t)$ are strictly decreasing on some small interval $t \in (0, \sigma)$. Since $f(v)<0$ for $v \in (0, a)$,  we deduce from \eqref{eq IVP RHS} that $v_0^{(k)}(t)$, $k=1,2,3$, stay strictly negative until $v_0(t)$ reaches a negative value. Hence, if $\beta = 0$, there must be $t_0$ such that $v_0(t_0)< 0$. 

All of the previous considerations lead us to defining the following shooting sets,
\begin{align*}
&S:= \{\beta \geq 0: \; v_\beta(t) < 0 \quad \text{for some  } t \in (0,T_\beta) \} \,, \\
& T:= \{ \beta \geq 0: \; v_\beta(t) > R \quad \text{ for some  } t \in (0,T_\beta) \text{  and  } v_\beta >  0 \text{ on } [0, t] \}.
\end{align*}
Clearly, $S$ and $T$ are open in $[0, \infty)$ because of the continuous dependence of the solution on the initial conditions. 
Moreover, $S$ and $T$ are disjoint because, as we observed above, once a solution $v_\beta$ enters the interval $(R, \infty)$, it stays there. We also already argued above that $0 \in S$ and $(\beta_0, \infty) \subset T$, i.e. both $S \neq \emptyset$ and $T \neq \emptyset$. 

Since our shooting parameter interval $[0, \infty)$ is connected, we deduce that $S \cup T \neq [0, \infty)$. Hence there must be $\beta^*>0$ and a corresponding solution $v^* := v_{\beta^*}$ such that $0 \leq v^* \leq R$. In particular, $v^*$ is bounded. This and the fact that $f$ is locally Lipschitz imply that $T_{\beta^*}=\infty$. By even reflection, we obtain a solution defined on all of $\R$, which we still refer to as $v^*$. Since $\beta^* >0$, $v^*$ has a strict local minimum in 0. By the classification of solutions from Proposition \ref{proposition symmetry and classification}, $v^*$ must be periodic. Moreover, it has a unique local maximum and minimum per period and is symmetric with respect to its extrema.

The uniqueness of $v^*$ up to translations follows from Proposition \ref{proposition (v,v')-uniqueness}. This completes the proof of Theorem \ref{theorem existence}.
\qed

%%%%%%%%%%%%%%%%%%%%%%%%

\subsection{Proof of Theorem \ref{theorem periodic}}

By \cite[Theorem 4.2]{lin}, the positivity of $u$ and the non-removability of the singularity in 0 imply that $u$ is radially symmetric. Since the function $v$ defined by $u(x)= |x|^{-\frac{n-4}{2}} v(\ln |x|)$ satisfies \eqref{eq ODE}, we are in a position to apply the classification result from Proposition \ref{proposition symmetry and classification} and we claim that $v$ is either the constant $B^{\frac{1}{p-1}}=a_0$ or periodic. Indeed, the only case that remains to be excluded is that $v(t) = c_n (2 \cosh(t-T))^{-\frac{n-4}{2}}$. But in this case, it is clear that $v(t) \sim c_n e^{\frac{n-4}{2}t}$ as $t \to -\infty$ and hence the singularity of $u$ would be removable, contradicting the assumptions. Thus, either $v$ is constant or periodic. Let $a:=\inf v$. Then by the first part of Theorem \ref{theorem existence} $a\in (0,a_0]$, and $a=a_0$ if and only if $v\equiv a_0$. Moreover, for $a<a_0$ the function $v$ is periodic with minimal value $a$. Therefore, by the second part of Theorem \ref{theorem existence} $v(t)=v_a(t+L)$ for some $L\in\R$. This completes the proof of Theorem \ref{theorem periodic}.
\qed

%%%%%%%%%%%%%%%%%%%%%%%%

\section{Appendix: Proof of Proposition \ref{proposition (v,v')-uniqueness}}

In this appendix, we give the proof of Proposition \ref{proposition (v,v')-uniqueness}, following and simplifying \cite{vdb}.

Let $v$ and $w$ be bounded solutions of \eqref{eq ODE with f} which satisfy $v(0) = w(0)$ and $v'(0) = w'(0)$. We can assume without loss that $v''(0) \geq w''(0)$ (otherwise exchange $v$ and $w$). We may assume furthermore (up to replacing $v(t)$ and $w(t)$ by $v(-t)$ and $w(-t)$) that $v'''(0) \geq w'''(0)$. 

Suppose, by contradiction, that $v \nequiv w$. Then by uniqueness of ODE solutions, $v^{(k)}(0) \neq w^{(k)}(0)$ for $k=2$ or $k=3$. In both cases, we deduce from our hypotheses on the initial conditions that
\[ v(t) > w(t) \quad \text{on } (0, \sigma) \]
for some sufficiently small $\sigma >0$. 

Recalling that $4B<A^2$ we introduce the positive real numbers
\[ \lambda = \frac{A}{2} - \sqrt{(\frac{A}{2})^2 - B}
\qquad \text{and}\qquad
\mu = \frac{A}{2} + \sqrt{(\frac{A}{2})^2 - B} \]
and define the auxiliary functions 
\[ \phi(t):= v''(t) - \lambda v(t) \qquad \text{and}\qquad \psi(t):= w''(t) - \lambda w(t). \]
Then by the hypotheses, we have
\begin{equation} \label{eq comp 1} (\phi-\psi)(0) \geq 0 \qquad \text{and}\qquad (\phi-\psi)'(0) \geq 0. \end{equation}
We note that $\lambda + \mu = A$ and $\lambda \mu = B$. Therefore equation \eqref{eq ODE with f} for $v$ and $w$ implies that
\[ (\phi-\psi)''(t) - \mu (\phi-\psi)(t) = |v(t)|^{p-1}v(t) - |w(t)|^{p-1} w(t) \]
for all $t \in \R$. Since $v(t) > w(t)$ on $(0, \sigma)$ and since the function $u \mapsto |u|^{p-1}u$ is strictly increasing on $\R$, this implies that
\begin{equation} \label{eq comp 2} (\phi-\psi)''(t) - \mu (\phi-\psi)(t) > 0 \quad \text{ for all   } t \in (0, \sigma). \end{equation}
The inequalities \eqref{eq comp 1} and \eqref{eq comp 2} and the fact that $\mu>0$ easily imply that $(\phi - \psi)(t) \geq 0$ for $t \in (0, \sigma)$, or equivalently, that
\begin{equation} \label{eq comp 3} (v-w)''(t) \geq \lambda (v-w) (t) > 0  \quad \text{ for all   } t \in (0, \sigma). \end{equation}
Since $(v-w)'(0) \geq 0$ by the hypotheses of the lemma and since $\lambda>0$, we see from \eqref{eq comp 3} that $(v-w)'(t) >0$ for all $t \in (0, \sigma)$. Hence $v-w$ is strictly increasing on $(0, \sigma)$ and since $\sigma >0$ was arbitrary with the property that $v-w>0$ on $(0, \sigma)$, we infer that $v-w$ remains strictly positive for all times. 

Repeating the above arguments for the interval $(0, \infty)$ instead of $(0, \sigma)$, we see from \eqref{eq comp 3} that $(v-w)'$ is positive and strictly increasing on $(0, \infty)$. This of course contradicts the boundedness of $v-w$. This proves that in fact we must have $v \equiv w$, concluding the proof of Proposition \ref{proposition (v,v')-uniqueness}.

\bibliography{fourthorder}
	\bibliographystyle{plain}
\end{document}